\documentclass[12pt]{article}
\usepackage{preprint-layout} %
\usepackage{preprint-notation} %
\usepackage{subcaption}

\newcommand{\dist}{\mathrm{dist}}

\makeatletter
\newcommand{\proofparagraph}[1]{%
  \ifnum\prevgraf=0\relax
    {\bfseries\boldmath{#1}}\hspace{0.6em}%
  \else
    \par\addvspace{.5\baselineskip}%
    \noindent{\bfseries\boldmath{#1}}\hspace{0.6em}%
  \fi
}
\makeatother

\title{Error Estimates for Nitsche's Method on Approximate Domains}
\date{\today}

\author{
Mats G. Larson,\quad
Karl Larsson,\quad
Shantiram Mahata
}
\date{}

\begin{document}

\maketitle

\begin{abstract}
We derive a priori error estimates for Nitsche's method applied to elliptic problems on approximate domains. Such approximations arise, for example, in unfitted finite element methods, data-driven simulations, and evolving domain problems, where the computational domain does not coincide exactly with the physical one.

We quantify geometric errors in terms of boundary location and normal perturbations and carry out the analysis in an abstract CutFEM framework under standard stability assumptions. In the energy norm, we obtain an estimate exhibiting an $h^{-1/2}$ amplification of the boundary location error. We then prove a refined $H^1$-seminorm estimate that removes this amplification, yielding a sharper bound with additive contributions from boundary location and normal errors. Finally, we establish an optimal order $L^2$-error estimate based on a refined duality argument, where the geometry contribution appears as a separate additive term, decoupled from the mesh size $h$.

The results reveal a fundamental distinction between the norms: the energy norm amplifies boundary location errors while remaining insensitive to normal perturbations, the $H^1$-seminorm separates location and normal errors, and the $L^2$-norm is insensitive to normal perturbations. This provides a clear characterization of how geometric approximation affects convergence in Nitsche-based finite element methods, with particular relevance for unfitted discretizations.
\end{abstract}

\section{Introduction}
\label{sec:introduction}

The weak imposition of Dirichlet boundary conditions by Nitsche's method~\cite{MR341903} is a standard tool in finite element analysis. While originally developed for fitted meshes, Nitsche's method plays a central role in unfitted or embedded discretizations, where the computational mesh does not conform to the physical boundary. In such settings, the boundary is typically approximated by a surrogate geometry. Approximate geometries also arise in the discretization of CAD models, surface triangulations, data-driven geometries obtained from measurements, and in simulations involving evolving or implicitly defined domains. Geometry approximation leads to a variational crime in the sense of Strang~\cite{MR305547}, where the discrete bilinear and linear forms are assembled on an approximate domain~\cite{MR421108,MR842644,MR2915563}. A key question is how such geometric perturbations affect the accuracy of the numerical solution, and in particular whether their impact differs across commonly used norms. In the classical fitted setting, Bramble, Dupont, and Thom\'ee~\cite{MR343657} showed that boundary approximation errors in a Nitsche formulation can be compensated by boundary value corrections. In modern unfitted methods, in particular within the CutFEM framework~\cite{MR3416285,BuHaLaZa25}, related ideas have been successfully employed; see, e.g.,~\cite{MR3739212}, where optimal convergence is recovered by combining Nitsche's method with a Taylor expansion of the boundary data in the normal direction.
Isoparametric finite elements have also been analysed for unfitted Nitsche based discretizations \cite{MR3829164, MR3975696}, yielding optimal convergence rates.

Existing analyses typically focus on recovering optimal convergence through boundary value corrections or higher-order geometry approximations. In contrast, the present work keeps the geometry errors explicit throughout the estimates and quantifies their different effects on the energy norm, $H^1$-seminorm, and $L^2$-norm.

The analysis in this work is carried out under the assumption that both the exact boundary $\partial\Omega$ and the approximate boundary $\partial\Omega_\delta$ are smooth. While this setting covers a broad class of geometric perturbations and approximations, it excludes polygonal and piecewise smooth geometries arising, for example, from piecewise linear level-set representations commonly used in unfitted finite element methods. Such geometries are nevertheless considered in the numerical experiments to investigate the practical behavior of the method beyond the scope of the present theory.

\paragraph{Contributions.}
We study Nitsche's method posed on a perturbed domain $\Omega_\delta$ and quantify how geometric errors affect the approximation. The analysis is carried out in an abstract CutFEM setting under standard stability and consistency assumptions. Our main contributions are as follows:
\begin{itemize}
\item We prove an energy norm estimate
\begin{equation} \label{eq:intro-energy-norm-error}
\tn u - u_h \tn_h \lesssim h^p + h^{-1/2}\delta
\end{equation}
where $\delta$ measures the boundary location error. This estimate separates discretization and geometry errors, and optimal energy convergence is obtained if $\delta \lesssim h^{p+1/2}$.

\item We derive a refined $H^1$-seminorm estimate
\begin{equation} \label{eq:intro-h1-norm-error}
\|\nabla (u - u_h)\|_{\Omega_\delta} \lesssim h^p + \delta + \delta_n
\end{equation}
where $\delta_n$ measures the boundary normal error. This result removes the $h^{-1/2}$ amplification present in~\eqref{eq:intro-energy-norm-error}, yielding a sharper characterization of geometric effects.

\item We further establish an $L^2$-error estimate
\begin{equation}
\|u - u_h\|_{\Omega_\delta} \lesssim h^{p + 1} + \delta
\end{equation}
The analysis is based on a refined duality argument that exploits the structure of the consistency error and the fact that the dual solution vanishes on $\partial \Omega_\delta$, allowing a sharper and more transparent treatment of the geometry contribution.
\end{itemize}
These results provide a clear separation between bulk discretization errors and boundary-induced geometric errors, and reveal a fundamental distinction between the norms: the energy norm amplifies boundary location errors while remaining insensitive to normal perturbations, the $H^1$-seminorm separates boundary location and normal errors, and the $L^2$-norm decouples the geometry error from the mesh size and is likewise insensitive to normal perturbations. This highlights that geometric perturbations affect different norms in qualitatively different ways.

\paragraph{Outline.}
The remainder of the paper is organized as follows. In Section~\ref{sec:problem-method}, we introduce the model problem, the perturbed domain setting, and the cut finite element formulation, including the abstract assumptions on stabilization. In Section~\ref{sec:standard-error-estimates}, we derive the energy norm estimate and discuss the role of geometric perturbations. Section~\ref{sec:improved-estimate} is devoted to refined error estimates, including both the $H^1$-seminorm estimate and the $L^2$-error estimate based on a refined duality argument. Numerical experiments are given in Section~\ref{sec:numer-examples}, followed by concluding remarks in Section~\ref{sec:conclusions}.

\section{The Problem and Method}
\label{sec:problem-method}

\paragraph{The Problem.}
Consider the Dirichlet boundary value problem
\begin{equation}
-\Delta u = f\quad \text{in $\Omega$}, \qquad u = 0 \quad \text{on $\partial \Omega$}
\label{eq:bvp}
\end{equation}
where $\Omega \subset \IR^d$ is a domain with smooth boundary $\partial \Omega$ and outward unit normal $n$, and $f \in H^{-1}(\Omega)$.
In many situations, the exact domain $\Omega$ is not available in the numerical discretization due to geometric approximation errors, uncertainty in the domain geometry, or because the geometry is only given implicitly. Instead, only an approximate domain $\Omega_\delta$ is available.
Below, we consider the situation where $\Omega_\delta$ is used in Nitsche's method and quantify how the resulting boundary location and normal errors affect the accuracy of the numerical solution.

\paragraph{Approximate Domain.}

Assuming that the approximate boundary $\partial \Omega_\delta$ is smooth with outward unit normal $n_\delta$, we quantify the geometric approximation error through the boundary location and normal errors
\begin{equation}
\| q - \mathrm{id} \|_{L^\infty (\partial \Omega_\delta)} \le \delta
,\qquad
\| n\circ q -  n_\delta \|_{L^\infty (\partial \Omega_\delta)} \le \delta_n
\label{eq:boundary-errors}
\end{equation}
where $q: U_{\delta_0}(\partial \Omega) \to \partial \Omega$ is the closest point mapping onto $\partial \Omega$ defined by
\begin{align}
q(x) = \arg \min_{y \in \partial \Omega} | x - y |
\label{eq:closest-point}
\end{align}
We use the notation of a tubular neighborhood; for a set $\Gamma \subset \IR^d$ and $r > 0$,
\begin{equation}
U_r(\Gamma) := \{ x \in \IR^d : \dist(x,\Gamma) < r \}
\end{equation}
and $\delta_0>0$ is a fixed parameter independent of $\delta$ and $h$ such that the closest point mapping \eqref{eq:closest-point} is well defined and has uniformly bounded Jacobian in $U_{\delta_0}(\partial \Omega)$.
We assume that $\delta \le \delta_0$, so that the closest point mapping $q$ is well defined on $\partial \Omega_\delta$. 
Then the boundary location error in \eqref{eq:boundary-errors} implies that
\(
\partial \Omega_\delta \subset U_{\delta}(\partial \Omega)
\).
We further assume that the domain mismatch is confined to a tubular neighborhood of the two boundaries, namely
\begin{equation}
\Omega_\delta\triangle\Omega
\subset
U_{c\delta}(\partial\Omega)
\cap
U_{c\delta}(\partial\Omega_\delta)
\label{eq:domain-mismatch-assumption}
\end{equation}
with a constant \(c\) independent of \(h\) and \(\delta\). Since \(c\) is fixed, we suppress it in the notation below and write \(U_\delta\) for \(U_{c\delta}\).

For sufficiently small $\delta$, we assume that the family of perturbed domains
$\{\Omega_\delta\}$ remains uniformly regular. In particular, the boundaries
$\partial\Omega_\delta$ are smooth, admit tubular neighborhoods of radius bounded
below independently of $\delta$, and the associated closest-point mappings have
uniformly bounded Jacobians. Furthermore, the trace, extension, and elliptic regularity estimates used below hold with constants independent of $\delta$.

\paragraph{Approximation Space.}
Let $\Omega_0$ be a convex polygonal domain in $\IR^d$ such that $\Omega \cup U_{\delta_0}(\partial \Omega) \subset \Omega_0$. Let $\mcT_{0,h}$ be a quasiuniform mesh of $\Omega_0$ consisting of shape regular elements $T$ with mesh parameter $h\in (0,h_0]$, and let $V_{0,h}\subset H^1(\Omega_0)$ be the associated finite element space of piecewise polynomials of degree $p$. Define the active mesh
\begin{align}
\mcT_h := \{ T\in \mcT_{0,h}: T\cap \Omega_\delta \ne \emptyset \}
\end{align}
and the corresponding active domain $\Omega_h = \bigcup_{T\in \mcT_h} T$.
The discrete space $V_h$ is defined by restriction from $V_{0,h}$,
\begin{align}
    V_h = \{ \hat{v}|_{\Omega_h} : \hat{v} \in V_{0,h} \}
\end{align}

Let $E:H^s(\Omega) \to H^s(\IR^d)$ be a continuous extension operator satisfying
\begin{align}\label{eq:ext-stab}
\| Ev \|_{H^s(\IR^d)} = \| v^e \|_{H^s(\IR^d)} \lesssim \| v \|_{H^s(\Omega)}, \qquad v \in H^s(\Omega)
\end{align}
and write $v^e = Ev$, see \cite{MR290095} for details.
Similarly, we assume that the family of perturbed domains
\(\{\Omega_\delta\}\) admits uniformly bounded extension operators.
More precisely, for each sufficiently small \(\delta\), let
\(
  E_\delta:H^s(\Omega_\delta)\to H^s(\mathbb R^d)
\)
be a continuous extension operator such that
\begin{equation}\label{eq:ext-stab-delta}
  \|E_\delta v\|_{H^s(\mathbb R^d)}
  \lesssim
  \|v\|_{H^s(\Omega_\delta)},
  \qquad v\in H^s(\Omega_\delta)
\end{equation}
where the hidden constant is assumed to be independent of \(\delta\). For
functions defined on \(\Omega_\delta\), we again write \(v^e=E_\delta v\);
the meaning of the extension is determined by the domain on which \(v\)
is originally defined. The bounds \eqref{eq:ext-stab} and \eqref{eq:ext-stab-delta} are assumed to hold for all Sobolev indices \(s\) used in the analysis; in particular, it is sufficient that they hold for $0\le s \le \max(p+1,2+\epsilon)$ with $0<\epsilon<1/2$.

Let $\pi_h: H^s(\Omega_h) \to V_h$ be a Scott--Zhang interpolation operator \cite{MR1011446} which, in its classical form, is a projection onto $V_h$, i.e., $\pi_h v_h = v_h$ for all $v_h \in V_h$.
It satisfies the elementwise interpolation error estimate
\begin{equation}
\|v-\pi_h v\|_{H^m(T)}
\lesssim
h^{s-m}\|v\|_{H^s(N_h(T))},
\qquad
0\le m\le s\le p+1,
\quad m=0,1
\label{eq:interpolation-elm}
\end{equation}
where $N_h(T)$ denotes the element patch consisting of $T$ and its neighboring elements. This operator is applied to functions in $H^s(\Omega)$ by first extending them to $\IR^d$ using the extension operator $E$ and then restricting to $\Omega_h$.

\paragraph{CutFEM on the Approximate Domain.}
The cut finite element method on $\Omega_\delta$ reads: find $u_h\in V_h$ such that
\begin{align}\label{eq:fem}
    A_h(u_h,v) = l_h(v)   \qquad \forall v\in V_h
\end{align}
where the forms are defined by 
\begin{align}
A_h(v,w) &= a_h(v,w) + s_h(v,w)
\label{eq:forms-Ah}
\\
a_h(v,w) &= (\nabla v, \nabla w)_{\Omega_\delta}  - (\nabla_{n_\delta} v, w)_{\partial \Omega_\delta} -  (v, \nabla_{n_\delta} w)_{\partial \Omega_\delta} + \beta h^{-1} (v,w)_{\partial \Omega_\delta}
\label{eq:forms}
\\
l_h(v) &= (f_\delta,v)_{\Omega_\delta}
\label{eq:forms-l}
\end{align}
with \(f_\delta\) denoting an extension of \(f\) from \(\Omega\) to
\(\Omega\cup U_\delta(\partial\Omega)\), satisfying \(f_\delta=f\) in \(\Omega\)
and such that the residual
\(
\Delta u^e+f_\delta
\)
belongs to \(H^s(U_\delta(\partial\Omega))\) and satisfies
\begin{equation}
\|\Delta u^e+f_\delta\|_{H^s(U_\delta(\partial\Omega))}
\lesssim
\|u^e\|_{H^{s+2}(U_\delta(\partial\Omega))},
\qquad
s=\max(p-1,\epsilon)
\label{eq:residual-extension}
\end{equation}

The form $s_h$ in \eqref{eq:forms-Ah} is a stabilization term added to ensure stability of the method regardless of the cut situation, and we detail this form next.

\paragraph{Stabilization Form.}
The stabilization form $s_h$ is a symmetric positive semi-definite bilinear form on $V_h$, and thus induces the semi-norm $\|v\|_{s_h} = s_h(v,v)^{1/2}$. We assume that it satisfies
\begin{align}
\| \nabla v \|_{\Omega_h}^2 &\lesssim  \| \nabla v \|_{\Omega_\delta}^2 + \| v \|_{s_h}^2, \quad v \in V_h
\end{align}
and the weak consistency property
\begin{align}
\| \pi_h w \|_{s_h} \lesssim h^{s-1} \| w \|_{H^{s}(\Omega_h)}, \quad w \in H^{s}(\Omega_h), \quad 1 \le s \le p+1
\label{eq:weak-consistency}
\end{align}
where we in particular note that $\| \pi_h v^e \|_{s_h} \lesssim h^{s-1} \| v \|_{H^{s}(\Omega)}$ for $v \in H^s(\Omega)$.
For the analysis, we extend the stabilization term to non-discrete arguments
defined on \(\Omega_h\) by setting
\begin{equation}
s_h(v,w):=s_h(\pi_hv,\pi_hw),
\qquad v,w\in H^s(\Omega_h)
\end{equation}
For functions originally defined on \(\Omega\), we first apply the extension
operator and then restrict to \(\Omega_h\).
This definition coincides with the original stabilization term for $v,w \in V_h$, since $\pi_h$ is a projection onto $V_h$.

The most common choice of stabilization form $s_h$ satisfying these properties, which we also use in our numerical experiments, is the so-called ghost penalty stabilization \cite{Bu10} defined by
\begin{align}
    s_{h}(v,w) = \sum_{j = 1}^{p} \sum_{F\in \mcF_h} \gamma_j h^{2j-1}\Bigl(\ldb \nabla_{n_{F}}^j v\rdb_F, \ldb \nabla_{n_{F}}^j w\rdb_F\Bigr)_F 
\label{eq:ghost-penalty}
\end{align}
where $\gamma_j>0$,  $\mcF_h$ is the set of interior faces in $\mcT_h(\partial\Omega_\delta) = \{T\in \mcT_h : T\cap \partial\Omega_\delta \ne \emptyset \}$, $n_{F}$ is a fixed unit normal to a face $F\in \mcF_h$,  $\nabla_{n_{F}}^j$ denotes the $j$-th order derivative in the direction of the normal $n_{F}$, and $\ldb\cdot\rdb_F$ stands for the jump operator across the face.

\paragraph{Properties of the Method.}
Let the energy norm be defined by
\begin{align}
\tn v \tn_h^2
= \| \nabla v \|^2_{\Omega_\delta} + h \| \nabla_{n_\delta} v \|^2_{\partial \Omega_\delta} + h^{-1} \| v \|^2_{\partial \Omega_\delta} + \| v \|^2_{s_{h}}
\label{eq:energy-norm}
\end{align}
With respect to this norm, the bilinear form is coercive,
\begin{align}
\tn v \tn_h^2 \lesssim A_{h}(v,v), \qquad v \in V_h
\end{align}
and for $\beta >0$ sufficiently large, continuous,
\begin{align}
A_h(v,w) \lesssim \tn v \tn_h \tn w \tn_h, \qquad v, w \in H^{3/2+\epsilon}(\Omega)+V_h,
\quad \epsilon>0
\end{align}
Here functions originally defined on \(\Omega\) are identified with their
extensions by \(E\), while functions originally defined on \(\Omega_\delta\)
are identified with their extensions by \(E_\delta\).

The following discrete Poincar\'e estimate holds,
\begin{equation}
\| v \|_{\Omega_\delta} \lesssim \tn v \tn_h, \qquad v \in V_h
\label{eq:discrete-poincare}
\end{equation}
which in particular implies that \(\tn \cdot \tn_h\) defines a norm on \(V_h\).

Using coercivity, the discrete Poincar\'e estimate, and the Lax--Milgram lemma, we conclude that there exists a unique solution \(u_h \in V_h\) to \eqref{eq:fem} satisfying the stability estimate
\begin{equation}
\tn u_h \tn_h \lesssim \| f_\delta \|_{H^{-1}(\Omega_\delta)}
\label{eq:stability}
\end{equation}

\section{The Standard Error Estimates}
\label{sec:standard-error-estimates}

\paragraph{Interpolation Estimate.} Using trace inequalities and the interpolation error estimate \eqref{eq:interpolation-elm} we have the following interpolation estimate for the energy norm,
\begin{equation}
\tn v^e - \pi_h v^e  \tn_h \lesssim h^{s-1} \| v \|_{H^{s}(\Omega)}, \quad 3/2 < s\le p+1
\label{eq:interpol-energy}
\end{equation}

\begin{lem}[Normal Slicing Estimate] \label{lem:normal-slicing-estimate}
Let \(Q=\omega\times I\), where \(\omega\subset\mathbb R^{d-1}\) is a bounded
Lipschitz domain and \(I\subset\mathbb R\) is an interval. For \(0<s<1\),
there holds
\begin{equation}
\int_\omega
\|v(x',\cdot)\|_{H^s(I)}^2
\,dx'
\lesssim
\|v\|_{H^s(Q)}^2
\end{equation}
for all \(v\in H^s(Q)\).
\end{lem}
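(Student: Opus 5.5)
The plan is to work with the Sobolev--Slobodeckij (Gagliardo) seminorm, which is equivalent to the $H^s$ norm on Lipschitz domains for $0<s<1$:
\[
\|v\|_{H^s(Q)}^2 \sim \|v\|_{L^2(Q)}^2 + |v|_{H^s(Q)}^2,\qquad
|v|_{H^s(Q)}^2=\int_Q\int_Q \frac{|v(x)-v(y)|^2}{|x-y|^{d+2s}}\,dx\,dy ,
\]
and similarly on $I$ with exponent $1+2s$. Since $Q=\omega\times I$ is Lipschitz, this equivalence holds on $Q$, and trivially on the interval $I$. By Fubini, the $L^2$ part is immediate: $\int_\omega \|v(x',\cdot)\|_{L^2(I)}^2\,dx' = \|v\|_{L^2(Q)}^2$. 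So everything reduces to bounding
\[
\int_\omega \int_I\int_I \frac{|v(x',t)-v(x',\tau)|^2}{|t-\tau|^{1+2s}}\,dt\,d\tau\,dx'
\]
by the full $H^s(Q)$ norm.

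The key step is the standard anisotropic characterization: for each direction $e_d$,
\[
\int_0^{\infty} \frac{\|v(\cdot+ r e_d)-v\|_{L^2}^2}{r^{1+2s}}\,dr \lesssim \|v\|_{H^s}^2 .
\]
This holds on $\mathbb R^d$ via Fourier transform, since $\|v(\cdot+re_d)-v\|^2=\int |e^{ir\xi_d}-1|^2|\hat v|^2\,d\xi$ and $\int_0^\infty |e^{ir\xi_d}-1|^2 r^{-1-2s}dr = c_s|\xi_d|^{2s}\le c_s|\xi|^{2s}$. To use it on $Q$, I would extend $v$ to $\tilde v=E_Q v\in H^s(\mathbb R^d)$ with $\|\tilde v\|_{H^s(\mathbb R^d)}\lesssim\|v\|_{H^s(Q)}$, which is possible because $Q$ is Lipschitz (Stein's extension, or the cited extension result). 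Then, writing $\tau=t+r$ and using Fubini,
\[
\int_\omega\int_I\int_I \frac{|v(x',t)-v(x',\tau)|^2}{|t-\tau|^{1+2s}}\,dt\,d\tau\,dx'
\le 2\int_0^\infty \frac{\|\tilde v(\cdot+re_d)-\tilde v\|_{L^2(\mathbb R^d)}^2}{r^{1+2s}}\,dr
\lesssim \|\tilde v\|_{H^s(\mathbb R^d)}^2 \lesssim \|v\|_{H^s(Q)}^2 .
\]
Here the first inequality holds because restricting $(x',t,\tau)$ to $\omega\times I\times I$ only shrinks the integration domain and $\tilde v=v$ there.

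Combining the $L^2$ and seminorm parts gives the claim. Measurability of $x'\mapsto \|v(x',\cdot)\|_{H^s(I)}$ follows from Tonelli, since the integrand is nonnegative and measurable on $\omega\times I\times I$. Finiteness for a.e.\ $x'$ comes out of the bound itself.

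The main obstacle is justifying the Fourier identity and the extension step with constants independent of anything but $s$, $\omega$, and $I$. The extension step is what lets us avoid working directly with the nonlocal seminorm on the product domain. The Fourier step is a one-line computation once we work on $\mathbb R^d$, with $c_s<\infty$ for $0<s<1$ because $|e^{i\theta}-1|^2\lesssim\min(\theta^2,1)$. A density argument with smooth functions is not needed, since both sides are defined for all $v\in H^s$ via Plancherel.
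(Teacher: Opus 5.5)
Your proof is correct and follows essentially the same route as the paper: extend $v$ to $\mathbb{R}^d$ with a Lipschitz extension operator on $Q$, use the Fourier transform to bound the part of the norm in the $t$-direction by the full symbol, and then restrict back to $Q$. The difference is minor. You measure the slices with the Gagliardo seminorm and the translation identity, comparing $|\xi_d|^{2s}\le|\xi|^{2s}$, whereas the paper takes the partial Fourier transform in $t$ and compares $(1+|\tau|^2)^s\le(1+|\xi'|^2+|\tau|^2)^s$. Your version has the small advantage that passing from $\mathbb{R}$ to $I$ just shrinks an integration domain, so you need no separate appeal to the boundedness of restriction $H^s(\mathbb{R})\to H^s(I)$.
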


\begin{proof}
Since \(Q\) is a Lipschitz product domain, there exists a bounded extension
operator from \(H^s(Q)\) to \(H^s(\mathbb R^d)\). It is therefore enough to prove the
estimate for \(v\in H^s(\mathbb R^d)\). Taking the Fourier transform in
\((x',t)\), we have
\begin{align}
\int_{\mathbb R^{d-1}}
\|v(x',\cdot)\|_{H^s(\mathbb R)}^2
\,dx'
&=
\int_{\mathbb R^{d-1}}\int_{\mathbb R}
(1+|\tau|^2)^s
|\widehat v(\xi',\tau)|^2
\,d\tau\,d\xi'
\\
&\le
\int_{\mathbb R^{d-1}}\int_{\mathbb R}
(1+|\xi'|^2+|\tau|^2)^s
|\widehat v(\xi',\tau)|^2
\,d\tau\,d\xi'
\\
&=
\|v\|_{H^s(\mathbb R^d)}^2
\end{align}
Restricting the one-dimensional slices from \(\mathbb R\) to \(I\) and the
tangential variable from \(\mathbb R^{d-1}\) to \(\omega\), and using the
boundedness of the extension operator, gives the result.
\end{proof}

\begin{lem}[Tubular Neighborhood Scaling Estimate]
\label{lem:scaling-lemma}
Assume that $\partial \Omega$ is smooth and that $\delta>0$ is sufficiently
small so that the tubular neighborhood $U_\delta(\partial \Omega)$ admits
normal coordinates. Let \(v \in H^s(U_\delta(\partial \Omega))\), with
\(v=0\) in \(U_\delta(\partial \Omega)\cap \Omega\). Assume that either
\(s=m\in \mathbb N\), \(m\ge 1\), or \(0<s<1/2\). Then
\begin{equation}
\|v\|_{L^2(U_\delta(\partial \Omega))}
\lesssim
\delta^s \|v\|_{H^s(U_\delta(\partial \Omega))}
\label{eq:scaling-lemma}
\end{equation}
where the constant is independent of \(\delta\).
\end{lem}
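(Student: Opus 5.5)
We work in normal coordinates on the tube. Let \(\rho\) denote the signed distance to \(\partial\Omega\), positive outside \(\Omega\). Because \(\partial\Omega\) is smooth and \(\delta\) is small, the map \((y,t)\mapsto y+t\,n(y)\) is a diffeomorphism from \(\partial\Omega\times(-c\delta,c\delta)\) onto \(U_\delta(\partial\Omega)\), and its Jacobian is bounded above and below independently of \(\delta\). We cover \(\partial\Omega\) by finitely many charts \(\omega_k\subset\mathbb R^{d-1}\) with a subordinate partition of unity. On each chart the tube becomes a product \(Q=\omega\times I\) with \(I=(-c\delta,c\delta)\). The hypothesis says that \(v(x',t)=0\) for \(t<0\), and since \(\rho<0\) means inside \(\Omega\), the support of \(v\) lies in \(t\in[0,c\delta)\). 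Both sides of the estimate are comparable under this change of variables, with constants uniform in \(\delta\); for the \(H^s\) norm with fractional \(s\) this follows because the Slobodeckij seminorm is invariant up to constants under bi-Lipschitz maps with bounded Jacobian. It therefore suffices to prove a one-dimensional estimate on each normal fiber and integrate it over \(x'\).

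\textbf{Integer case \(s=m\ge 1\).} For a.e.\ \(x'\), the function \(t\mapsto v(x',t)\) lies in \(H^1(I)\) and vanishes at \(t=0\); the trace is zero because \(v=0\) for \(t<0\). By the fundamental theorem of calculus and Cauchy--Schwarz,
\[
|v(x',t)|\le t^{1/2}\|\partial_t v(x',\cdot)\|_{L^2(0,c\delta)}.
\]
This gives the one-dimensional Poincaré bound
\[
\|v(x',\cdot)\|_{L^2(0,c\delta)}\lesssim \delta\,\|\partial_t v(x',\cdot)\|_{L^2(0,c\delta)}.
\]
Integrating over \(x'\) yields \(\|v\|\lesssim \delta\|\nabla v\|\le\delta\|v\|_{H^1}\). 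For \(m\ge 2\), the derivative \(\partial_t^j v\) also vanishes on \(t<0\) and lies in \(H^1\) for \(j\le m-1\), so we iterate the same argument: \(\|v\|\lesssim\delta\|\partial_t v\|\lesssim\cdots\lesssim\delta^m\|\partial_t^m v\|\le \delta^m \|v\|_{H^m}\). In curved coordinates \(\partial_t=n\cdot\nabla\), and the Jacobian factors are bounded, so no lower-order terms spoil the bound.

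\textbf{Fractional case \(0<s<1/2\).} Here traces do not exist, but extension by zero is bounded. We use the Slobodeckij seminorm on each fiber. Fix \(x'\), write \(g(t)=v(x',t)\), supported in \([0,c\delta)\) and zero on \((-c\delta,0)\). For \(t\in(0,c\delta)\) and \(\tau\in(-c\delta,0)\) we have \(|t-\tau|\le 2c\delta\), so
\[
\int_{-c\delta}^{0}\frac{|g(t)-g(\tau)|^2}{|t-\tau|^{1+2s}}\,d\tau
=|g(t)|^2\int_{-c\delta}^0\frac{d\tau}{|t-\tau|^{1+2s}}
\gtrsim |g(t)|^2\, t^{-2s}\gtrsim |g(t)|^2\,\delta^{-2s}.
\]
Here we use that \(\int_{-c\delta}^0 (t-\tau)^{-1-2s}\,d\tau=\tfrac{1}{2s}\bigl(t^{-2s}-(t+c\delta)^{-2s}\bigr)\gtrsim t^{-2s}\) for \(t\le c\delta\), with the constant depending only on \(s\). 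Integrating in \(t\) gives
\[
\|g\|^2_{L^2(I)}\lesssim \delta^{2s}|g|^2_{H^s(I)}.
\]
This is a Hardy-type inequality, and it holds for every \(0<s<1\); the restriction \(s<1/2\) in the statement only guarantees that such functions are plentiful. We then integrate over \(x'\in\omega\) and apply Lemma~\ref{lem:normal-slicing-estimate} to bound \(\int_\omega |v(x',\cdot)|_{H^s(I)}^2\,dx'\lesssim\|v\|^2_{H^s(Q)}\). The constant in that lemma must be uniform in \(\delta\), although \(I\) shrinks with \(\delta\). The cleanest fix is to avoid the extension in the lemma altogether: the fiberwise Slobodeckij seminorm integrated over \(x'\) is directly dominated by the full Slobodeckij double integral over \(Q\), after restricting pairs of points to a common fiber. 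I expect this uniformity issue to be the main technical point; the rest, namely summing over charts with the partition of unity (whose multipliers are smooth and \(\delta\)-independent) and mapping back to \(U_\delta(\partial\Omega)\), is routine.
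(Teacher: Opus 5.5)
Your argument follows the paper's route, and the integer case is fine: your iterated Poincar\'e bound is equivalent to the paper's Taylor formula with integral remainder. The shared steps are normal coordinates with a flattened patch $Q_\delta=\omega\times(-\delta,\delta)$, a one-dimensional fiber estimate, the same cross-term bound $\int_{-\delta}^0|t-\tau|^{-1-2s}\,d\tau\gtrsim\delta^{-2s}$ in the fractional case, and Lemma~\ref{lem:normal-slicing-estimate} to pass from fibers back to $Q_\delta$.

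You are right that $\delta$-uniformity of this last step is the delicate point. The paper applies Lemma~\ref{lem:normal-slicing-estimate} on the shrinking patch $Q_\delta$ without tracking the constant of the extension operator used in its proof. However, your proposed fix does not work as stated. The quantity $\int_\omega\int_I\int_I|v(x',t)-v(x',\tau)|^2|t-\tau|^{-1-2s}\,dt\,d\tau\,dx'$ cannot be extracted from $\int_Q\int_Q|v(x)-v(y)|^2|x-y|^{-d-2s}\,dx\,dy$ by ``restricting pairs of points to a common fiber''. The set $\{x'=y'\}$ has measure zero in $Q\times Q$, and the two kernels have different homogeneity, so restriction gives nothing.

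The domination does hold with a $\delta$-independent constant, but it needs an averaging argument. Bound $|v(x',t)-v(x',\tau)|^2$ by the mean of $|v(x',t)-v(z)|^2+|v(z)-v(x',\tau)|^2$ over $z$ in a subset of $Q_\delta$ of measure $\sim|t-\tau|^d$ lying within distance $\lesssim|t-\tau|$ of both points; such a set exists since $|t-\tau|\le 2\delta$. Then integrate out $\tau$ to recover the kernel $|x-z|^{-d-2s}$.

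A shorter repair avoids fibers, charts, and Lemma~\ref{lem:normal-slicing-estimate} altogether. Apply your cross-term idea directly to the $d$-dimensional Slobodeckij seminorm on $U_\delta(\partial\Omega)$. For $x\in U_\delta(\partial\Omega)\setminus\Omega$, consider the points $y+\tau n(y)$ with $y\in\partial\Omega\cap B(q(x),\delta)$ and $-\delta<\tau<0$. They lie in $U_\delta(\partial\Omega)\cap\Omega\cap B(x,3\delta)$ and fill a set of measure $\gtrsim\delta^d$, so $\int_{U_\delta(\partial\Omega)\cap\Omega}|x-y|^{-d-2s}\,dy\gtrsim\delta^{-2s}$. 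Since $v=0$ on $U_\delta(\partial\Omega)\cap\Omega$, this gives $|v|^2_{H^s(U_\delta(\partial\Omega))}\ge 2\int_{U_\delta(\partial\Omega)\setminus\Omega}|v(x)|^2\int_{U_\delta(\partial\Omega)\cap\Omega}|x-y|^{-d-2s}\,dy\,dx\gtrsim\delta^{-2s}\|v\|^2_{L^2(U_\delta(\partial\Omega))}$. This holds for every $0<s<1$, with a constant that is manifestly independent of $\delta$.
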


\begin{rem}
In the applications below, the lemma is used with
\(s=\max(p-1,\epsilon)\). For \(p=1\), we choose
$s=\epsilon$ with \(0<\epsilon<1/2\), while for \(p\ge 2\), we take
\(s=p-1\in\mathbb N\). Thus the lemma covers all cases used in the
error estimates.
\end{rem}

\begin{proof}
We first consider the one-dimensional estimate to illustrate the argument, then extend to higher dimensions.

\proofparagraph{One-Dimensional Estimate.}
Let \(I=(-\delta,\delta)\) and let \(v\in H^s(I)\) satisfy
\(v=0\) on \((-\delta,0)\). Hence, it suffices
to estimate \(\|v\|_{L^2(0,\delta)}\). 

\proofparagraph{\(s=m\in\mathbb N\).}
Since \(v=0\) on \((-\delta,0)\) and \(v\in H^m(I)\), the traces of the
derivatives up to order \(m-1\) vanish at the interface,
\begin{equation}
v^{(j)}(0^+)=0,
\qquad j=0,\ldots,m-1
\end{equation}
Thus, for \(0<x<\delta\), Taylor's formula with integral remainder gives
\begin{equation}
v(x)
=
\frac{1}{(m-1)!}
\int_0^x (x-t)^{m-1} v^{(m)}(t)\,dt
\end{equation}
Using Cauchy--Schwarz, we obtain
\begin{align}
|v(x)|^2
&\lesssim
\left(\int_0^x (x-t)^{2m-2}\,dt\right)
\left(\int_0^x |v^{(m)}(t)|^2\,dt\right)
\lesssim
x^{2m-1}\|v^{(m)}\|_{L^2(0,\delta)}^2
\end{align}
and therefore
\begin{align}
\|v\|_{L^2(0,\delta)}^2
&\lesssim
\int_0^\delta x^{2m-1}\,dx
\,
\|v^{(m)}\|_{L^2(0,\delta)}^2
\lesssim
\delta^{2m}\|v\|_{H^m(I)}^2
\end{align}
Hence
\begin{equation}
\|v\|_{L^2(0,\delta)}
\lesssim
\delta^m \|v\|_{H^m(I)}
\end{equation}

\proofparagraph{\(0<s<1/2\).}
By the definition of the fractional seminorm and since \(v=0\) on
\((-\delta,0)\),
\begin{equation}
|v|_{H^s(I)}^2
\gtrsim
\int_0^\delta\int_{-\delta}^0
\frac{|v(x)|^2}{|x-y|^{1+2s}}\,dy\,dx
\end{equation}
For \(0<x<\delta\),
\begin{align}
\int_{-\delta}^0 \frac{dy}{|x-y|^{1+2s}}
&=
\frac{1}{2s}\left(x^{-2s}-(x+\delta)^{-2s}\right)
\gtrsim
\delta^{-2s}
\end{align}
Consequently,
\begin{align}
|v|_{H^s(I)}^2
&\gtrsim
\delta^{-2s}
\int_0^\delta |v(x)|^2\,dx
=
\delta^{-2s}\|v\|_{L^2(0,\delta)}^2
\end{align}
and thus
\begin{equation}
\|v\|_{L^2(0,\delta)}
\lesssim
\delta^s |v|_{H^s(I)}
\le
\delta^s \|v\|_{H^s(I)}
\end{equation}

\proofparagraph{Higher-Dimensional Estimate.}
We cover \(\partial\Omega\) by finitely many coordinate patches and flatten the
boundary. Since the tubular coordinate maps are uniformly regular for
sufficiently small \(\delta\), it is enough to prove the estimate on a
reference patch \(Q_\delta=\omega\times(-\delta,\delta)\), where
\(v=0\) for \(t<0\).

\proofparagraph{\(s=m\in\mathbb N\).} The one-dimensional estimate applied in the normal
variable gives, for smooth \(v\),
\begin{equation}
\int_\omega\int_0^\delta |v(x',t)|^2\,dt\,dx'
\lesssim
\delta^{2m}
\int_\omega\int_0^\delta |\partial_t^m v(x',t)|^2\,dt\,dx'
\end{equation}
and hence
\begin{equation}
\|v\|_{L^2(Q_\delta)}
\lesssim
\delta^m \|v\|_{H^m(Q_\delta)}
\end{equation}
The estimate for general \(v\in H^m(Q_\delta)\) follows by density.

\proofparagraph{\(0<s<1/2\).}
By the normal slicing estimate Lemma~\ref{lem:normal-slicing-estimate},
\begin{equation}
\int_\omega
\|v(x',\cdot)\|_{H^s(-\delta,\delta)}^2
\,dx'
\lesssim
\|v\|_{H^s(Q_\delta)}^2
\end{equation}
Applying the one-dimensional fractional estimate for almost every \(x'\), and
using that \(v=0\) for \(t<0\), gives
\begin{align}
\int_\omega\int_0^\delta |v(x',t)|^2\,dt\,dx'
&\lesssim
\delta^{2s}
\int_\omega
\|v(x',\cdot)\|_{H^s(-\delta,\delta)}^2
\,dx'
\lesssim
\delta^{2s}\|v\|_{H^s(Q_\delta)}^2
\end{align}

\end{proof}

\begin{rem}
The scaling in Lemma~\ref{lem:scaling-lemma} is sharp. Let
\(\widehat v\in C^\infty(-1,1)\) be nonzero, with
\(\widehat v=0\) on \((-1,0)\), and define
\(v_\delta(x)=\widehat v(x/\delta)\). Then
\begin{equation}
\|v_\delta\|_{L^2(-\delta,\delta)}
\sim
\delta^{1/2}
\end{equation}
Moreover, for the values of \(s\) covered by the lemma,
\begin{equation}
\|v_\delta\|_{H^s(-\delta,\delta)}
\lesssim
\delta^{1/2-s}
\end{equation}
for \(0<\delta\le1\). Hence
\begin{equation}
\frac{\|v_\delta\|_{L^2(-\delta,\delta)}}
{\|v_\delta\|_{H^s(-\delta,\delta)}}
\gtrsim
\delta^s
\end{equation}
Thus the factor \(\delta^s\) cannot in general be improved.
\end{rem}

\begin{lem}[Domain Mismatch Estimate]
\label{lem:strip}
Assume that the domain mismatch condition \eqref{eq:domain-mismatch-assumption} holds,  that $\partial \Omega_\delta$ is smooth, and that $\delta>0$ is sufficiently small so that the tubular neighborhood $U_\delta(\partial \Omega_\delta)$ admits normal coordinates.
Then, for $v \in H^1(U_\delta(\partial \Omega_\delta))$, there holds
\begin{equation}
\| v \|^2_{\Omega_\delta \setminus \Omega}
\lesssim
\delta \| v \|^2_{\partial \Omega_\delta}
+
\delta^2 \| \nabla v \|^2_{\Omega_\delta \setminus \Omega}
\label{eq:strip-estimate}
\end{equation}
\end{lem}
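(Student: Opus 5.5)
We will prove \eqref{eq:strip-estimate} with a one-dimensional fundamental theorem of calculus argument along normal lines to \(\partial\Omega_\delta\), in the same spirit as the higher-dimensional part of the proof of Lemma~\ref{lem:scaling-lemma}.

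\emph{Normal coordinates.} By the uniform regularity assumption on \(\{\Omega_\delta\}\), the tubular neighborhood \(U_\delta(\partial\Omega_\delta)\) is parametrized by \(x = y - t\,n_\delta(y)\), with \(y\in\partial\Omega_\delta\) and \(|t|<c\delta\). The sign is chosen so that \(t>0\) points into \(\Omega_\delta\). The Jacobian of this map is \(1+O(t)\), hence bounded above and below uniformly in \(\delta\). Consequently,
\[
\|w\|^2_{S} \sim \int_{\partial\Omega_\delta}\int |w(y - t n_\delta(y))|^2\,dt\,dy
\]
for any measurable \(S\subset U_\delta(\partial\Omega_\delta)\), where the inner integral runs over the \(t\)-values of points in \(S\). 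By \eqref{eq:domain-mismatch-assumption}, \(\Omega_\delta\setminus\Omega \subset U_\delta(\partial\Omega_\delta)\cap\Omega_\delta\). Hence for each \(y\) the fibre \(I_y := \{t\in(0,c\delta): y-tn_\delta(y)\in \Omega_\delta\setminus\Omega\}\) is a subset of \((0,c\delta)\).

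\emph{Pointwise estimate.} We take \(v\) smooth and conclude by density in \(H^1\); the trace term is continuous in \(H^1\). For \(t\in I_y\) we write
\[
v(y-tn_\delta) = v(y) - \int_0^t \nabla_{n_\delta} v(y-sn_\delta)\,ds .
\]
Squaring and applying Cauchy--Schwarz gives
\[
|v(y-tn_\delta)|^2 \lesssim |v(y)|^2 + t\int_0^t |\nabla v(y-sn_\delta)|^2\,ds .
\]
Integrating over \(t\in I_y\subset(0,c\delta)\) and then over \(y\), using the Jacobian equivalence, yields
\[
\|v\|^2_{\Omega_\delta\setminus\Omega}\lesssim \delta\|v\|^2_{\partial\Omega_\delta} + \delta^2 \|\nabla v\|^2_{L},
\qquad
L:=\{y-sn_\delta(y): 0<s<c\delta\}.
\]
The set \(L\) collects all normal segments joining \(\partial\Omega_\delta\) to the mismatch region.

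\emph{Main obstacle.} The gradient term above lives on \(L\), whereas \eqref{eq:strip-estimate} states it on the smaller set \(\Omega_\delta\setminus\Omega\). The segment from \(y\) to a point of \(\Omega_\delta\setminus\Omega\) may pass through \(\Omega\cap\Omega_\delta\) when the fibre \(I_y\) does not start at \(t=0\). To close this gap I would first use the smoothness of both boundaries together with the smallness of \(\delta\). Under these assumptions each fibre should be a single interval \((0,\tau(y))\) with \(\tau(y)\) equal to the signed distance of \(\partial\Omega\) along the normal, or empty. The condition \(\partial\Omega_\delta\subset U_\delta(\partial\Omega)\) with small normal deviation makes \(\partial\Omega\) a graph \(t=\tau(y)\) over \(\partial\Omega_\delta\) in these coordinates. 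This holds at least if \(\delta_n\) is small, which follows from the uniform regularity for small \(\delta\). With \(\Omega_\delta\setminus\Omega=\{0<t<\tau(y)\}\), the segments of the integration stay inside \(\Omega_\delta\setminus\Omega\), so \(L\) may be replaced by \(\Omega_\delta\setminus\Omega\) and the estimate follows.

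If this graph property cannot be justified from the stated assumptions, I would fall back to the weaker bound with \(\|\nabla v\|_{U_\delta(\partial\Omega_\delta)\cap\Omega_\delta}\) on the right-hand side. That bound is what the later applications actually need, and it is immediate from the computation above.
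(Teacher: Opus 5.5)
Your proposal is correct and follows essentially the same route as the paper. Both arguments use normal coordinates on $\partial\Omega_\delta$ with bounded Jacobian, apply the fundamental theorem of calculus along each normal fibre starting from the trace at $t=0$, then Cauchy--Schwarz, and integrate over a fibre of length $\lesssim\delta$ and then over $\partial\Omega_\delta$. The paper simply takes for granted that each fibre of $\Omega_\delta\setminus\Omega$ is an interval $(a_y,b_y)$ with $t=0$ in its closure, which is exactly the graph property you flag and justify via small normal deviation, so your fallback bound is not needed.
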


\begin{proof}
Using the normal coordinate mapping %
\begin{equation}
\Phi_\delta:\partial\Omega_\delta \times (-\delta,\delta) \to U_\delta(\partial\Omega_\delta),
\qquad
\Phi_\delta(y,t)=y+t n_\delta(y)
\label{eq:normal-coordinate-map-delta}
\end{equation}
and boundedness of its Jacobian, we have
\begin{equation}
\| v \|^2_{\Omega_\delta \setminus \Omega}
\lesssim
\int_{\partial \Omega_\delta}\int_{a_y}^{b_y}
|v(y+t n_\delta(y))|^2 \, dt \, d\sigma(y)
\label{eq:strip-proof-1}
\end{equation}
where $(a_y,b_y) \subset (-\delta,\delta)$ is the set of $t$ such that $y+t n_\delta(y) \in \Omega_\delta \setminus \Omega$, with $b_y-a_y \lesssim \delta$.

For fixed $y$, define $w_y(t) = v(y+t n_\delta(y))$.
By the fundamental theorem of calculus,
\begin{align}
|w_y(t)|^2
&\lesssim
|w_y(0)|^2
+
|t| \int_{a_y}^{b_y} |w_y'(s)|^2 \, ds
,\qquad
|w_y'(s)|
\le
|\nabla v(y+s n_\delta(y))|
\label{eq:strip-proof-3}
\end{align}
Integrating over $(a_y,b_y)$ and using $b_y-a_y \lesssim \delta$ yields
\begin{equation}
\int_{a_y}^{b_y} |w_y(t)|^2 dt
\lesssim
\delta |v(y)|^2
+
\delta^2 \int_{a_y}^{b_y} |\nabla v(y+s n_\delta(y))|^2 ds
\label{eq:strip-proof-4}
\end{equation}
Integrating over $\partial \Omega_\delta$ concludes the proof.
\qedhere
\end{proof}

\begin{thm}[Energy Error Estimate]
\label{thm:energy-error}
Assume that the geometric approximation assumptions
\eqref{eq:boundary-errors} and \eqref{eq:domain-mismatch-assumption} hold,
that \(\delta \lesssim h\), that \(u \in H^{p+1}(\Omega)\), and that its
extension satisfies
\(
u^e \in W^1_\infty(U_\delta(\partial\Omega))
\cap
H^{2+\epsilon}(U_\delta(\partial\Omega))
\)
for \(0<\epsilon<1/2\). Then there is a constant, independent of \(h\),
\(\delta\), \(\delta_n\), and the cut configuration, such that
\begin{equation}
\tn u^e - u_h \tn_h
\lesssim
h^p \|u\|_{H^{p+1}(\Omega)}
+
h^{r-1} \|u^e\|_{H^r(U_\delta(\partial\Omega))}
+
h^{-1/2}\delta \|u^e\|_{W^1_\infty(U_\delta(\partial\Omega))}
\label{eq:energy-error}
\end{equation}
where \(r = \max(p+1,2+\epsilon)\).
\end{thm}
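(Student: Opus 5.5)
The plan is a Strang-type argument. We split
\[
u^e-u_h=(u^e-\pi_h u^e)+(\pi_h u^e-u_h)=:\eta+\xi_h .
\]
The interpolation part is controlled by \eqref{eq:interpol-energy} with $s=p+1$. This gives $\tn\eta\tn_h\lesssim h^p\|u\|_{H^{p+1}(\Omega)}$. For the discrete part we use coercivity and write
\[
\tn\xi_h\tn_h^2\lesssim A_h(\xi_h,\xi_h)=A_h(\pi_h u^e-u^e,\xi_h)+\bigl(A_h(u^e,\xi_h)-l_h(\xi_h)\bigr).
\]
Continuity of $A_h$ bounds the first term by $\tn\eta\tn_h\tn\xi_h\tn_h$. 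What remains is to estimate the consistency functional $R(v)=A_h(u^e,v)-l_h(v)$ for $v\in V_h$ by the last two terms of \eqref{eq:energy-error} times $\tn v\tn_h$. The stabilization term $s_h(u^e,v)=s_h(\pi_hu^e,v)$ is bounded by \eqref{eq:weak-consistency}, which gives $h^p\|u\|_{H^{p+1}(\Omega)}\|v\|_{s_h}$.

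For $a_h(u^e,v)-l_h(v)$ we integrate by parts on $\Omega_\delta$. The normal-derivative term $(\nabla_{n_\delta}u^e,v)_{\partial\Omega_\delta}$ cancels, and we obtain
\[
R(v)=-(\Delta u^e+f_\delta,v)_{\Omega_\delta}-(u^e,\nabla_{n_\delta}v)_{\partial\Omega_\delta}+\beta h^{-1}(u^e,v)_{\partial\Omega_\delta}+s_h(u^e,v).
\]

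\textbf{Boundary terms.} Since $u=0$ on $\partial\Omega$, for $x\in\partial\Omega_\delta$ we have $u^e(x)=u^e(x)-u^e(q(x))$. Hence $\|u^e\|_{L^\infty(\partial\Omega_\delta)}\le\delta\|u^e\|_{W^1_\infty(U_\delta)}$, and so $\|u^e\|_{\partial\Omega_\delta}\lesssim\delta\|u^e\|_{W^1_\infty}$ because $|\partial\Omega_\delta|$ is uniformly bounded. It follows that
\[
h^{-1}|(u^e,v)_{\partial\Omega_\delta}|\le h^{-1/2}\|u^e\|_{\partial\Omega_\delta}\,h^{-1/2}\|v\|_{\partial\Omega_\delta}\lesssim h^{-1/2}\delta\|u^e\|_{W^1_\infty}\tn v\tn_h .
\]
Similarly, $|(u^e,\nabla_{n_\delta}v)_{\partial\Omega_\delta}|\le h^{-1/2}\|u^e\|_{\partial\Omega_\delta}\,h^{1/2}\|\nabla_{n_\delta}v\|_{\partial\Omega_\delta}$, which gives the same bound. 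These two terms produce the $h^{-1/2}\delta$ contribution.

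\textbf{Residual term (main obstacle).} The residual $\Delta u^e+f_\delta$ vanishes in $\Omega$, so it is supported in $\Omega_\delta\setminus\Omega\subset U_\delta(\partial\Omega)$. We estimate
\[
|(\Delta u^e+f_\delta,v)_{\Omega_\delta\setminus\Omega}|\le\|\Delta u^e+f_\delta\|_{U_\delta(\partial\Omega)}\,\|v\|_{\Omega_\delta\setminus\Omega}.
\]

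For the first factor we apply Lemma~\ref{lem:scaling-lemma} with $s=\max(p-1,\epsilon)$, which is admissible since the residual is zero on the $\Omega$ side. Combined with \eqref{eq:residual-extension}, this yields $\lesssim\delta^{s}\|u^e\|_{H^{s+2}(U_\delta)}=\delta^{r-2}\|u^e\|_{H^r(U_\delta)}$, where $s+2=r$.

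For the second factor we use Lemma~\ref{lem:strip}:
\[
\|v\|_{\Omega_\delta\setminus\Omega}\lesssim\delta^{1/2}\|v\|_{\partial\Omega_\delta}+\delta\|\nabla v\|_{\Omega_\delta}\lesssim(\delta^{1/2}h^{1/2}+\delta)\tn v\tn_h\lesssim h\tn v\tn_h,
\]
where the last step uses $\delta\lesssim h$.

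The product is therefore bounded by $\delta^{r-2}h\,\|u^e\|_{H^r}\tn v\tn_h\lesssim h^{r-1}\|u^e\|_{H^r(U_\delta(\partial\Omega))}\tn v\tn_h$, again by $\delta\lesssim h$.

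Two points here need care. The first is that the extension $E_\delta$ of a discrete function is not needed, since $v\in V_h$ is already defined on $\Omega_h\supset\Omega_\delta$, so Lemma~\ref{lem:strip} applies on the strip. The second is the fractional case $p=1$ of the scaling lemma.

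\textbf{Conclusion.} Collecting the bounds gives $|R(v)|\lesssim(\text{RHS of }\eqref{eq:energy-error})\tn v\tn_h$. Dividing by $\tn\xi_h\tn_h$ and applying the triangle inequality yields \eqref{eq:energy-error}.
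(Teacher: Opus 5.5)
Your proposal is correct and follows essentially the same route as the paper. You split off the interpolant, use coercivity and continuity, and integrate the consistency functional by parts into residual, boundary and stabilization terms. You bound the residual via Lemma~\ref{lem:scaling-lemma} and Lemma~\ref{lem:strip} with $\delta\lesssim h$, and the boundary terms via $\|u^e\|_{\partial\Omega_\delta}\lesssim\delta\|u^e\|_{W^1_\infty(U_\delta(\partial\Omega))}$. The only cosmetic difference is that you test directly with $v=\pi_h u^e-u_h$ instead of taking a supremum over $V_h$.
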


\begin{rem}[Condition for Optimal Energy Norm Convergence]
The estimate in Theorem~\ref{thm:energy-error} shows that the boundary location error enters with an \(h^{-1/2}\)-amplification. This behavior is intrinsic to the Nitsche formulation and reflects the presence of the boundary penalty term \(h^{-1}\|v\|_{\partial\Omega_\delta}^2\) in the definition of the energy norm \eqref{eq:energy-norm}. 
The estimate therefore guarantees optimal order convergence of order \(h^p\)
in the energy norm provided that
\(\delta \lesssim h^{p+1/2}\).
\end{rem}

\begin{rem}[Regularity Assumptions]
The assumption \(u^e \in W^1_\infty(U_\delta(\partial\Omega))\) is used only to control the boundary consistency term through the estimate \eqref{eq:u-boundary-shift}. In particular, if \(d=2,3\) and \(p\ge 2\), then the Sobolev embedding \(H^{p+1}(\Omega) \hookrightarrow W^1_\infty(\Omega)\), together with stability of the extension operator, implies this regularity.

Furthermore, since \(u \in H^{p+1}(\Omega)\) and the extension operator is stable, we have \(u^e \in H^{p+1}(U_\delta(\partial\Omega))\). Together with the additional assumption \(u^e \in H^{2+\epsilon}(U_\delta(\partial\Omega))\) for \(\epsilon>0\), this yields \(u^e \in H^r(U_\delta(\partial\Omega))\) where \(r=\max(p+1,2+\epsilon)\). The additional \(H^{2+\epsilon}\)-regularity is only needed in the case \(p=1\), in order to apply Lemma~\ref{lem:scaling-lemma} with \(s>0\) in the residual estimate.
\end{rem}

\begin{proof}
Splitting the error by adding and subtracting the interpolant, we get
\begin{equation}
\tn u^e - u_h \tn_h \leq \tn u^e - \pi_h u^e \tn_h + \tn \pi_h u^e - u_h \tn_h
\label{eq:error-split}
\end{equation}
where the first term is estimated by interpolation \eqref{eq:interpol-energy}.
Using the coercivity of $A_h$ on $V_h$, we obtain for the second term
\begin{align}
\tn \pi_h u^e - u_h \tn_h
 &\lesssim \sup_{v \in V_h} \frac{A_h(\pi_h u^e - u_h, v)}{\tn v \tn_h} 
 \\
 & = \sup_{v \in V_h}\Big( \underbrace{\frac{A_h(\pi_h u^e - u^e, v)}{\tn v \tn_h} }_{\lesssim \tn u^e - \pi_h u^e \tn_h}
 +  \underbrace{\frac{A_h(u^e, v) - l_h(v)}{\tn v \tn_h}}_{\bigstar} \Big) 
\label{eq:galerkin-orthog}
\end{align}
To estimate $\bigstar$ we use partial integration and $\Delta u^e + f_\delta=0$ in $\Omega$,
\begin{align}
A_h(u^e, v) - l_h(v) &=  (\nabla u^e, \nabla v)_{\Omega_\delta}  - (\nablandel u^e, v)_{\partial \Omega_\delta} -  (u^e, \nablandel v)_{\partial \Omega_\delta}
\\
&\qquad  + \beta h^{-1} (u^e,v)_{\partial \Omega_\delta} + s_h(\pi_h u^e,v) - (f_\delta,v)_{\Omega_\delta}
\nonumber\\
&= - (\Delta u^e + f_\delta,v)_{\Omega_\delta} -  (u^e, \nablandel v)_{\partial \Omega_\delta}
\label{eq:energy-norm-middle-term}
\\&\qquad \nonumber
+ \beta h^{-1} (u^e,v)_{\partial \Omega_\delta} + s_h(\pi_h u^e,v)
\\
&\lesssim \|\Delta u^e + f_\delta\|_{\Omega_\delta\setminus \Omega} \|v\|_{\Omega_\delta\setminus \Omega} +h^{-1/2}\|u^e\|_{\partial \Omega_\delta} h^{1/2}\| \nablandel v\|_{\partial \Omega_\delta} 
\\
&\qquad + \beta h^{-1/2}\|u^e\|_{\partial \Omega_\delta}  h^{-1/2}\|v\|_{\partial \Omega_\delta} 
+ \| \pi_h u^e \|_{s_h} \|v\|_{s_h}
\nonumber
\end{align}
Since $\Omega_\delta\setminus \Omega \subset U_\delta(\partial \Omega)$, $\Delta u^e + f_\delta = 0$ in $\Omega$, and 
$\Delta u^e + f_\delta \in H^s(U_\delta(\partial \Omega))$ we may employ Lemma \ref{lem:scaling-lemma} and the residual stability \eqref{eq:residual-extension} to get
\begin{align}
\|\Delta u^e + f_\delta\|_{\Omega_\delta\setminus \Omega}
&\le \|\Delta u^e + f_\delta\|_{U_\delta(\partial \Omega)}
\label{eq:delta-s0}
\\&
\lesssim \delta^{s} \|\Delta u^e + f_\delta\|_{H^s(U_\delta(\partial \Omega))}
\\&
\lesssim \delta^{s} \|u^e\|_{H^{s+2}(U_\delta(\partial \Omega))}
\label{eq:delta-s}
\end{align}
Next, using Lemma \ref{lem:strip} we have
\begin{align}
\| v \|^2_{\Omega_\delta \setminus \Omega} &\lesssim  \delta \| v \|^2_{\partial \Omega_\delta} + \delta^2 \|\nabla v \|^2_{\Omega_\delta \setminus \Omega}
\\
&
\lesssim \delta h  h^{-1} \| v \|^2_{\partial \Omega_\delta} + \delta^2 \|\nabla v \|^2_{\Omega_\delta \setminus \Omega} 
\lesssim \delta( h + \delta) \tn v \tn_h^2
\label{eq:strip-estimate-energy}
\end{align}
Taking \(s=\max(p-1,\epsilon)\) in \eqref{eq:delta-s}, and combining this with \eqref{eq:weak-consistency} and \eqref{eq:strip-estimate-energy}, we obtain
\begin{align}
\|\Delta u^e + f_\delta\|_{\Omega_\delta\setminus \Omega} \|v\|_{\Omega_\delta\setminus \Omega}
&\lesssim
\bigl((\delta^{s+1/2} h^{1/2} + \delta^{s+1})\bigr)
\|u^e\|_{H^{s+2}(U_\delta(\partial \Omega))}
\tn v \tn_h
\\
&\lesssim
h^{s+1} \|u^e\|_{H^{s+2}(U_\delta(\partial \Omega))}
\tn v \tn_h
\label{eq:prelim-energy-error}
\end{align}
where, in the last inequality, we used that the geometric error satisfies $\delta \lesssim h$ to recover optimal order convergence.

For the boundary terms, we use the estimate
\begin{equation}
\|u^e\|_{\partial \Omega_\delta} \lesssim \delta \|u^e\|_{W^1_\infty(U_\delta(\partial \Omega))}
\label{eq:u-boundary-shift}
\end{equation}
which follows from the mean value theorem and the fact that $u^e = 0$ on $\partial \Omega$ and $\partial \Omega_\delta$ lies within distance $\delta$ of $\partial \Omega$.
Combining this estimate with \eqref{eq:prelim-energy-error}, we conclude that
\begin{align}
&|A_h(u^e,v) - l_h(v)|
\nonumber
\\ &\qquad
\lesssim
\Big( h^p \|u\|_{H^{p+1}(\Omega)} + h^{r-1} \|u^e\|_{H^{r}(U_\delta(\partial \Omega))} + h^{-1/2}\delta \|u^e\|_{W^1_\infty(U_\delta(\partial \Omega))} \Big) \tn v \tn_h
\label{eq:consistency}
\end{align}
where $r = s+2 = \max(p+1,2+\epsilon)$.
Inserting this bound and the interpolation estimate \eqref{eq:interpol-energy} into \eqref{eq:galerkin-orthog}, we obtain the desired result \eqref{eq:energy-error}.
\end{proof}

\section{Refined Error Estimates}
\label{sec:improved-estimate}
We define the Nitsche normal flux on the boundary $\partial \Omega_\delta$ by
\begin{equation}
\Sigma_{n_\delta}(v) = \nabla_{n_\delta} v - \beta h^{-1} v, \qquad v \in V_h
\label{eq:flux}
\end{equation}
which is the consistent boundary flux associated with the Nitsche formulation.
Using the triangle inequality, we directly conclude that
\begin{equation}
h^{1/2} \| \Sigma_{n_\delta}(v) \|_{\partial \Omega_\delta} \lesssim \tn v \tn_h, \qquad v \in V_h
\label{eq:flux-inverse}
\end{equation}
In the next lemma, we will control the Nitsche normal flux in the $H^{-1}(\partial\Omega_\delta)$-norm, where
\(H^{-1}(\partial\Omega_\delta):=\bigl(H^1(\partial\Omega_\delta)\bigr)'\)
with dual norm
\begin{equation}
\|\xi\|_{H^{-1}(\partial\Omega_\delta)}
:=
\sup_{\mu\in H^1(\partial\Omega_\delta)\setminus\{0\}}
\frac{\langle \xi,\mu\rangle_{\partial\Omega_\delta}}
{\|\mu\|_{H^1(\partial\Omega_\delta)}}
\end{equation}
For \(\xi\in L^2(\partial\Omega_\delta)\), and in particular for
\(\xi=\Sigma_{n_\delta}(v)\), the duality pairing is identified with the
\(L^2(\partial\Omega_\delta)\)-inner product,
\begin{equation}
\langle \xi,\mu\rangle_{\partial\Omega_\delta}
=
(\xi,\mu)_{\partial\Omega_\delta}
\end{equation}

\paragraph{Discrete Dual Problem.}
Given \(\psi\in [L^2(\Omega_\delta)]^d\), find \(\phi_h\in V_h\) such that
\begin{equation}
A_h(v,\phi_h)=(\nabla v,\psi)_{\Omega_\delta}
\qquad \forall v\in V_h
\label{eq:discrete-dual-problem}
\end{equation}
We note that 
with $v = \phi_h$ we obtain the stability estimate 
\begin{align}
\tn \phi_h \tn_h \lesssim \| \psi \|_{\Omega_\delta}
\label{eq:dual-stab}
\end{align}

\begin{lem}[Discrete Boundary Flux Estimate]
\label{lem:boundary-flux-estimate}
Let \(\phi_h\in V_h\) solve the discrete dual problem \eqref{eq:discrete-dual-problem}. Then
\begin{equation}
\|\Sigma_{n_\delta}(\phi_h)\|_{H^{-1}(\partial\Omega_\delta)}
\lesssim
\|\psi\|_{\Omega_\delta}
\label{eq:boundary-flux-estimate}
\end{equation}
where
the constant is independent of \(h\), \(\delta\), and the cut configuration.
\end{lem}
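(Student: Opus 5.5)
The idea is to test the discrete dual problem \eqref{eq:discrete-dual-problem} with a discrete lifting of an arbitrary boundary function, so that the Nitsche flux $\Sigma_{n_\delta}(\phi_h)$ appears as the only boundary pairing with that function. For $v\in V_h$, the definitions \eqref{eq:forms} and \eqref{eq:flux} give
\begin{equation*}
A_h(v,\phi_h)=(\nabla v,\nabla\phi_h)_{\Omega_\delta}-(\nabla_{n_\delta}v,\phi_h)_{\partial\Omega_\delta}-(v,\Sigma_{n_\delta}(\phi_h))_{\partial\Omega_\delta}+s_h(v,\phi_h).
\end{equation*}
Combining this with \eqref{eq:discrete-dual-problem} yields the identity
\begin{equation*}
(v,\Sigma_{n_\delta}(\phi_h))_{\partial\Omega_\delta}=(\nabla v,\nabla\phi_h-\psi)_{\Omega_\delta}-(\nabla_{n_\delta}v,\phi_h)_{\partial\Omega_\delta}+s_h(v,\phi_h),\qquad v\in V_h .
\end{equation*}

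Fix $\mu\in H^1(\partial\Omega_\delta)$. Since $\|\mu\|_{H^{1/2}(\partial\Omega_\delta)}\le\|\mu\|_{H^1(\partial\Omega_\delta)}$, there is a lifting $w\in H^1(\Omega_\delta)$ with $w|_{\partial\Omega_\delta}=\mu$ and $\|w\|_{H^1(\Omega_\delta)}\lesssim\|\mu\|_{H^1(\partial\Omega_\delta)}$. The constant is uniform in $\delta$ by the assumed uniform regularity of $\{\Omega_\delta\}$. We extend $w$ by $E_\delta$ using \eqref{eq:ext-stab-delta} and set $v=\pi_h w^e\in V_h$. Then we split
\begin{equation*}
(\mu,\Sigma_{n_\delta}(\phi_h))_{\partial\Omega_\delta}=(w^e-\pi_hw^e,\Sigma_{n_\delta}(\phi_h))_{\partial\Omega_\delta}+(\pi_hw^e,\Sigma_{n_\delta}(\phi_h))_{\partial\Omega_\delta}.
\end{equation*}

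For the first term we use an elementwise trace inequality on cut elements, $\|z\|^2_{\partial\Omega_\delta\cap T}\lesssim h^{-1}\|z\|_T^2+h\|\nabla z\|_T^2$. Together with \eqref{eq:interpolation-elm} for $s=1$, this gives $\|w^e-\pi_hw^e\|_{\partial\Omega_\delta}\lesssim h^{1/2}\|w^e\|_{H^1(\Omega_h)}$. Combined with \eqref{eq:flux-inverse} and \eqref{eq:dual-stab}, the first term is bounded by $\|w\|_{H^1(\Omega_\delta)}\tn\phi_h\tn_h\lesssim\|\mu\|_{H^1(\partial\Omega_\delta)}\|\psi\|_{\Omega_\delta}$.

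For the second term we insert the identity with $v=\pi_hw^e$ and bound its pieces in turn:
\begin{itemize}
\item The volume term: $(\nabla v,\nabla\phi_h-\psi)_{\Omega_\delta}\lesssim\|\nabla\pi_hw^e\|_{\Omega_\delta}(\tn\phi_h\tn_h+\|\psi\|_{\Omega_\delta})$, and $H^1$-stability of $\pi_h$ gives $\|\nabla\pi_hw^e\|_{\Omega_h}\lesssim\|w\|_{H^1(\Omega_\delta)}$.
\item The normal-derivative term: the discrete inverse trace inequality $h^{1/2}\|\nabla\pi_hw^e\|_{\partial\Omega_\delta}\lesssim\|\nabla\pi_hw^e\|_{\Omega_h}$ gives $|(\nabla_{n_\delta}v,\phi_h)_{\partial\Omega_\delta}|\lesssim h^{1/2}\|\nabla v\|_{\partial\Omega_\delta}\,h^{-1/2}\|\phi_h\|_{\partial\Omega_\delta}\lesssim\|w\|_{H^1}\tn\phi_h\tn_h$.
\item The stabilization term: $s_h(\pi_hw^e,\phi_h)\le\|\pi_hw^e\|_{s_h}\|\phi_h\|_{s_h}$, and the weak consistency \eqref{eq:weak-consistency} with $s=1$ gives $\|\pi_hw^e\|_{s_h}\lesssim\|w^e\|_{H^1(\Omega_h)}$.
\end{itemize}
Each piece is therefore bounded by $\|\mu\|_{H^1(\partial\Omega_\delta)}\|\psi\|_{\Omega_\delta}$ after using \eqref{eq:dual-stab}. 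Dividing by $\|\mu\|_{H^1(\partial\Omega_\delta)}$ and taking the supremum yields \eqref{eq:boundary-flux-estimate}.

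The main obstacle is that $w$ is only in $H^1$, so its normal derivative has no trace on $\partial\Omega_\delta$. This is exactly why I pass to $\pi_hw^e$ and rely on the discrete inverse trace inequality on cut elements, which must hold with constants independent of the cut configuration. For smooth $\partial\Omega_\delta$ and $h$ small this is standard, since the boundary is locally a graph over each element. A secondary point is that the lifting and extension constants must be uniform in $\delta$, which is covered by the standing uniform regularity assumptions.
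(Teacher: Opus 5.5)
Your proof is correct and has the same skeleton as the paper's proof. You use the same splitting of $(\mu,\Sigma_{n_\delta}(\phi_h))_{\partial\Omega_\delta}$ into an interpolation-error part and a $\pi_h$-part, the same identity from testing the discrete dual problem with the interpolant, and the same term-by-term bounds. Passing to $\pi_h w^e$ is forced in any case, because that identity holds only for test functions in $V_h$. The paper does the same even though its lifting is smoother, so this step is not specific to your lower-regularity choice.

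The real difference is the regularity of the lifting. The paper lifts $\mu\in H^1(\partial\Omega_\delta)$ to $\eta\in H^{3/2}(\Omega_\delta)$. This makes the boundary interpolation error $O(h)$ and the stabilization factor $\|\pi_h\eta^e\|_{s_h}$ equal to $O(h^{1/2})$. After \eqref{eq:flux-inverse} a surplus factor $h^{1/2}$ remains, which is simply discarded. You lift only into $H^1(\Omega_\delta)$. There the $h^{1/2}$ from the cut trace inequality exactly balances the $h^{-1/2}$ in \eqref{eq:flux-inverse}, and weak consistency with $s=1$ suffices.

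Your argument only uses $\|w\|_{H^1(\Omega_\delta)}\lesssim\|\mu\|_{H^{1/2}(\partial\Omega_\delta)}$. It therefore proves the stronger bound $\|\Sigma_{n_\delta}(\phi_h)\|_{H^{-1/2}(\partial\Omega_\delta)}\lesssim\|\psi\|_{\Omega_\delta}$, from which the stated $H^{-1}$ estimate follows. The paper's $H^{3/2}$ lifting gains nothing here beyond matching the $H^1(\partial\Omega_\delta)$ pairing used in Term $I$ of Theorem~\ref{thm:h1-error}.

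Your version would let that term be controlled by $\|u^e\|_{H^{1/2}(\partial\Omega_\delta)}$ instead. Interpolating between the $L^2$ and $H^1$ boundary bounds derived there, this is of order $(\delta(\delta+\delta_n))^{1/2}$, which is potentially sharper than $\delta+\delta_n$.
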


\begin{proof}
Let \(\mu\in H^1(\partial\Omega_\delta)\). We show that
\begin{equation}
|(\mu,\Sigma_{n_\delta}(\phi_h))_{\partial\Omega_\delta}|
\lesssim
\|\mu\|_{H^1(\partial\Omega_\delta)}
\|\psi\|_{\Omega_\delta}
\label{eq:flux-duality-bound}
\end{equation}
By the assumed uniform regularity of the family \(\{\Omega_\delta\}\),
let \(\eta\in H^{3/2}(\Omega_\delta)\) be a lifting of \(\mu\) satisfying
\begin{equation}
  \eta|_{\partial\Omega_\delta}=\mu,
  \qquad
  \|\eta\|_{H^{3/2}(\Omega_\delta)}
  \lesssim
  \|\mu\|_{H^1(\partial\Omega_\delta)}
\end{equation}
where the hidden constant is independent of \(\delta\). Let
\(\eta^e=E_\delta\eta\) denote its extension to \(\mathbb R^d\).
Then, by the uniform stability of \(E_\delta\) \eqref{eq:ext-stab-delta} and restriction to the active domain,
\begin{equation}
\|\eta^e\|_{H^{3/2}(\Omega_h)}
\lesssim
\|\eta\|_{H^{3/2}(\Omega_\delta)}
\lesssim
\|\mu\|_{H^1(\partial\Omega_\delta)}
\label{eq:trace-lifting}
\end{equation}
Since \(\eta^e=\mu\) on \(\partial\Omega_\delta\), we split
\begin{align}
(\mu,\Sigma_{n_\delta}(\phi_h))_{\partial\Omega_\delta}
&=
(\eta^e-\pi_h\eta^e,\Sigma_{n_\delta}(\phi_h))_{\partial\Omega_\delta}
+
(\pi_h\eta^e,\Sigma_{n_\delta}(\phi_h))_{\partial\Omega_\delta}
\label{eq:flux-split}
\end{align}
For the first term, using the cut-boundary interpolation estimate
\begin{equation}
\|\eta^e-\pi_h\eta^e\|_{\partial\Omega_\delta}
\lesssim
h\|\eta^e\|_{H^{3/2}(\Omega_h)}
\label{eq:boundary-interpolation-eta}
\end{equation}
which follows by summing the cut trace inequality 
\begin{equation}
\|v\|_{T\cap\partial\Omega_\delta}^2 \lesssim h^{-1}\|v\|_T^2 + h\|\nabla v\|_T^2
\label{eq:cut-trace-inequality}
\end{equation}
over \(T\in\mathcal T_h(\partial\Omega_\delta)\),
with constants independent of how \(\partial\Omega_\delta\) cuts the elements,
and using the interpolation estimate \eqref{eq:interpolation-elm},
together with
\eqref{eq:flux-inverse} and \eqref{eq:dual-stab}, we obtain
\begin{align}
|(\eta^e-\pi_h\eta^e,\Sigma_{n_\delta}(\phi_h))_{\partial\Omega_\delta}|
&\lesssim
h\|\eta^e\|_{H^{3/2}(\Omega_h)}
\|\Sigma_{n_\delta}(\phi_h)\|_{\partial\Omega_\delta}
\\
&\lesssim
h^{1/2}\|\eta^e\|_{H^{3/2}(\Omega_h)}
\tn\phi_h\tn_h
\\
&\lesssim
\|\eta^e\|_{H^{3/2}(\Omega_h)}
\|\psi\|_{\Omega_\delta}
\label{eq:flux-first-term}
\end{align}
For the second term in \eqref{eq:flux-split}, using the definition of \(A_h\), the definition of
\(\Sigma_{n_\delta}\), and the discrete dual problem, we have
\begin{align}
(\pi_h\eta^e,\Sigma_{n_\delta}(\phi_h))_{\partial\Omega_\delta}
&=
(\nabla\pi_h\eta^e,\nabla\phi_h)_{\Omega_\delta}
-
(\nabla_{n_\delta}\pi_h\eta^e,\phi_h)_{\partial\Omega_\delta}
\\&\qquad\nonumber
+
s_h(\pi_h\eta^e,\phi_h)
-
A_h(\pi_h\eta^e,\phi_h)
\\
&=
(\nabla\pi_h\eta^e,\nabla\phi_h)_{\Omega_\delta}
-
(\nabla_{n_\delta}\pi_h\eta^e,\phi_h)_{\partial\Omega_\delta}
\label{eq:flux-second-identity}
\\&\qquad\nonumber
+
s_h(\pi_h\eta^e,\phi_h)
-
(\nabla\pi_h\eta^e,\psi)_{\Omega_\delta}
\end{align}
Using \(H^1\)-stability of \(\pi_h\), the cut trace inequality \eqref{eq:cut-trace-inequality}, an inverse estimate on \(V_h\), the weak consistency \eqref{eq:weak-consistency}, and the stability \eqref{eq:dual-stab}, we get
\begin{align}
|(\nabla\pi_h\eta^e,\nabla\phi_h)_{\Omega_\delta}|
&\le
\|\nabla\pi_h\eta^e\|_{\Omega_\delta}
\|\nabla\phi_h\|_{\Omega_\delta}
\lesssim
\|\eta^e\|_{H^{3/2}(\Omega_h)}
\|\psi\|_{\Omega_\delta}
\label{eq:flux-grad-term}
\\
|(\nabla_{n_\delta}\pi_h\eta^e,\phi_h)_{\partial\Omega_\delta}|
&\lesssim
h^{1/2}\|\nabla\pi_h\eta^e\|_{\partial\Omega_\delta}
h^{-1/2}\|\phi_h\|_{\partial\Omega_\delta}
\lesssim
\|\eta^e\|_{H^{3/2}(\Omega_h)}
\|\psi\|_{\Omega_\delta}
\label{eq:flux-normal-term}
\\
|s_h(\pi_h\eta^e,\phi_h)|
&\le
\|\pi_h\eta^e\|_{s_h}\|\phi_h\|_{s_h}
\lesssim
h^{1/2}\|\eta^e\|_{H^{3/2}(\Omega_h)}
\tn\phi_h\tn_h
\\&
\lesssim
\|\eta^e\|_{H^{3/2}(\Omega_h)}
\|\psi\|_{\Omega_\delta}
\label{eq:flux-stab-term}
\\
|(\nabla\pi_h\eta^e,\psi)_{\Omega_\delta}|
&\le
\|\nabla\pi_h\eta^e\|_{\Omega_\delta}
\|\psi\|_{\Omega_\delta}
\lesssim
\|\eta^e\|_{H^{3/2}(\Omega_h)}
\|\psi\|_{\Omega_\delta}
\label{eq:flux-rhs-term}
\end{align}
Combining \eqref{eq:flux-second-identity}--\eqref{eq:flux-rhs-term} and
using \eqref{eq:trace-lifting}, we obtain
\begin{equation}
|(\pi_h\eta^e,\Sigma_{n_\delta}(\phi_h))_{\partial\Omega_\delta}|
\lesssim
\|\mu\|_{H^1(\partial\Omega_\delta)}
\|\psi\|_{\Omega_\delta}
\label{eq:flux-second-bound}
\end{equation}
Together with \eqref{eq:flux-first-term}, this proves
\eqref{eq:flux-duality-bound}. Taking the supremum over all
\(\mu\in H^1(\partial\Omega_\delta)\) with
\(\|\mu\|_{H^1(\partial\Omega_\delta)}=1\) gives
\eqref{eq:boundary-flux-estimate}.
\end{proof}

\begin{thm}[Improved $H^1$ Error Estimate]
\label{thm:h1-error}
Assume that the geometric approximation assumptions \eqref{eq:boundary-errors} and \eqref{eq:domain-mismatch-assumption} hold, that \(\delta \lesssim h\), that
\(
u \in H^{p+1}(\Omega)
\),
and that its extension satisfies
\(
u^e \in W^2_\infty(U_\delta(\partial\Omega)) \cap H^{2+\epsilon}(U_\delta(\partial\Omega))
\) for \(0 < \epsilon < 1/2\).
Then there is a constant, independent of \(h\), \(\delta\), \(\delta_n\), and the cut configuration, such that
\begin{equation}
\|\nabla (u^e - u_h) \|_{\Omega_\delta}
\lesssim
(h^p + \delta_n) \| u \|_{H^{p+1}(\Omega)}
+
h^{r-1} \| u^e \|_{H^{r}(U_\delta(\partial\Omega))}
+
\delta \| u^e \|_{W^2_\infty(U_\delta(\partial\Omega))}
\label{eq:h1-error}
\end{equation}
where $r = \max(p+1,2+\epsilon)$.
\end{thm}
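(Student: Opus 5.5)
The plan is a duality argument for the gradient that uses the discrete dual problem \eqref{eq:discrete-dual-problem} together with the boundary flux bound of Lemma~\ref{lem:boundary-flux-estimate}. The $H^{-1}$ control of the flux is what should let us avoid the $h^{-1/2}$ amplification.

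First split $u^e-u_h=(u^e-\pi_h u^e)+(\pi_h u^e-u_h)$. The interpolation part is bounded by \eqref{eq:interpol-energy} and contributes $h^p\|u\|_{H^{p+1}(\Omega)}$. For the discrete part $e_h=\pi_h u^e-u_h$, test the dual problem with $v=e_h$ and $\psi=\nabla e_h$:
\[
\|\nabla e_h\|_{\Omega_\delta}^2=A_h(e_h,\phi_h)=A_h(\pi_h u^e-u^e,\phi_h)+\bigl(A_h(u^e,\phi_h)-l_h(\phi_h)\bigr).
\]
The first term is bounded by continuity and \eqref{eq:dual-stab} as $\tn u^e-\pi_hu^e\tn_h\|\psi\|_{\Omega_\delta}$. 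The consistency term is expanded exactly as in \eqref{eq:energy-norm-middle-term}:
\[
-(\Delta u^e+f_\delta,\phi_h)_{\Omega_\delta\setminus\Omega}-(u^e,\Sigma_{n_\delta}(\phi_h))_{\partial\Omega_\delta}+s_h(\pi_hu^e,\phi_h).
\]
Here the two boundary terms have been combined into the Nitsche flux \eqref{eq:flux}. The residual and stabilization terms are treated exactly as in the proof of Theorem~\ref{thm:energy-error}. This uses Lemma~\ref{lem:scaling-lemma}, Lemma~\ref{lem:strip}, \eqref{eq:weak-consistency} and \eqref{eq:dual-stab}, and yields $h^{r-1}\|u^e\|_{H^r}+h^p\|u\|_{H^{p+1}}$ times $\|\psi\|_{\Omega_\delta}$.

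The key step is the boundary term. By Lemma~\ref{lem:boundary-flux-estimate},
\[
|(u^e,\Sigma_{n_\delta}(\phi_h))_{\partial\Omega_\delta}|\lesssim\|u^e\|_{H^1(\partial\Omega_\delta)}\|\psi\|_{\Omega_\delta},
\]
so it remains to show $\|u^e\|_{H^1(\partial\Omega_\delta)}\lesssim \delta\|u^e\|_{W^2_\infty}+\delta_n\|u\|_{H^{p+1}(\Omega)}$. The $L^2$ part is \eqref{eq:u-boundary-shift}. For the tangential gradient, write $\nabla_{\partial\Omega_\delta}u^e=P_\delta\nabla u^e$ with $P_\delta=I-n_\delta\otimes n_\delta$ and compare with $(\nabla u^e)\circ q$ on $\partial\Omega$, which is purely normal because $u=0$ there, i.e.\ $(\nabla u^e)\circ q=(\nabla_n u)\circ q\, n\circ q$. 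This gives
\[
P_\delta\nabla u^e=P_\delta\bigl(\nabla u^e-(\nabla u^e)\circ q\bigr)+(\nabla_nu)\circ q\,P_\delta(n\circ q-n_\delta).
\]
The first term is bounded pointwise by $\delta\|u^e\|_{W^2_\infty}$ using the mean value theorem along the segment from $x$ to $q(x)$, of length at most $\delta$. The second term is bounded by $\delta_n|(\nabla_nu)\circ q|$. Its $L^2(\partial\Omega_\delta)$ norm is at most $\delta_n\|\nabla u\|_{\partial\Omega}$, using the bounded Jacobian of $q$, and the trace inequality then gives $\delta_n\|u\|_{H^2(\Omega)}\le \delta_n\|u\|_{H^{p+1}(\Omega)}$. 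For $p=1$ one uses the $H^{2+\epsilon}$ assumption or $\|u^e\|_{W^2_\infty}$ instead.

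Collecting the bounds and dividing by $\|\nabla e_h\|_{\Omega_\delta}=\|\psi\|_{\Omega_\delta}$ gives \eqref{eq:h1-error}. I expect the main obstacle to be the $H^1(\partial\Omega_\delta)$ bound just described. It requires care with the tangential projection and with transporting norms between $\partial\Omega_\delta$ and $\partial\Omega$ via $q$, and these steps must be uniform in $\delta$. The remaining steps reuse the energy estimate almost verbatim.
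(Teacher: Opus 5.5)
Your proposal is correct and follows the paper's proof essentially step for step. It uses the same interpolation/discrete splitting and the discrete dual problem; your direct choice $\psi=\nabla(\pi_h u^e-u_h)$ harmlessly replaces the paper's density argument. It also uses the same three-term consistency splitting with Lemma~\ref{lem:boundary-flux-estimate} for the flux term, and an equivalent tangential/normal decomposition of $\nabla_{\partial\Omega_\delta}u^e$ based on $P_{\partial\Omega_\delta}n=P_{\partial\Omega_\delta}(n-n_\delta)$.

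The one step to tighten is transporting $(\nabla_n u)\circ q$ from $\partial\Omega_\delta$ to $\partial\Omega$. That needs the surface Jacobian of $q|_{\partial\Omega_\delta}$ bounded below, not an upper bound on $Dq$. The paper sidesteps this by evaluating the normal derivative at $x\in\partial\Omega_\delta$ and applying the uniform trace inequality on $\partial\Omega_\delta$. You can do the same after one more mean-value step, $|\nabla u^e(q(x))|\le|\nabla u^e(x)|+\delta\|u^e\|_{W^2_\infty(U_\delta(\partial\Omega))}$. No special treatment of $p=1$ is needed, since $\nabla u\in H^1$ already has an $L^2$ trace.
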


\begin{rem}[Condition for Optimal $H^1$-Seminorm Convergence]
The estimate in Theorem~\ref{thm:h1-error} shows that the boundary location error and the normal approximation error enter the \(H^1\)-seminorm additively, without the \(h^{-1/2}\)-amplification present in the energy norm estimate. The estimate therefore guarantees optimal order convergence of order \(h^p\) in the \(H^1\)-seminorm provided that
\(\delta \lesssim h^p\) and \(\delta_n \lesssim h^p\).
\end{rem}

\begin{rem}[Additional Regularity]
Compared to the energy norm estimate, Theorem~\ref{thm:h1-error} additionally assumes
\(u^e \in W^2_\infty(U_\delta(\partial\Omega))\)
in order to control the boundary flux term, in particular the tangential gradient \(\nabla_{\partial\Omega_\delta} u^e\) on \(\partial\Omega_\delta\).

This is a boundary regularity assumption: it is only used to control geometric effects near \(\partial\Omega_\delta\), and does not influence the bulk approximation error or interpolation estimates. A sufficient condition for this assumption is, by Sobolev embedding,
\(
u^e \in H^{2+d/2+\tilde\epsilon}(U_\delta(\partial\Omega))
\)
for some \(\tilde\epsilon>0\).
\end{rem}

\begin{proof}
Splitting the error by adding and subtracting the interpolant, we get
\begin{equation}
\| \nabla (u^e - u_h) \|_{\Omega_\delta}
\le
\| \nabla (u^e - \pi_h u^e) \|_{\Omega_\delta}
+ 
\| \nabla (\pi_h u^e - u_h) \|_{\Omega_\delta}
\label{eq:error-split}
\end{equation}
The first term is estimated by the interpolation estimate and the stability of
extension,
\begin{equation}
\| \nabla (u^e - \pi_h u^e) \|_{\Omega_\delta}
\lesssim
h^p \|u^e\|_{H^{p+1}(\Omega_\delta)}
\lesssim
h^p \|u\|_{H^{p+1}(\Omega)}
\label{eq:interpol-h1}
\end{equation}
It remains to estimate the second term in \eqref{eq:error-split}.
Let \(\psi\in [C_0^\infty(\Omega_\delta)]^d\), and let \(\phi_h\in V_h\)
solve the discrete dual problem \eqref{eq:discrete-dual-problem}.
Setting \(v=\pi_h u^e-u_h\) in the discrete dual problem, we obtain
\begin{align}
(\nabla(\pi_h u^e-u_h),\psi)_{\Omega_\delta}
&=
A_h(\pi_h u^e-u_h,\phi_h)
\\
&=
A_h(\pi_h u^e-u^e,\phi_h)
+
A_h(u^e,\phi_h)-l_h(\phi_h)
\label{eq:error-rep}
\end{align}
The first term is bounded by continuity of \(A_h\), interpolation, and
\eqref{eq:dual-stab},
\begin{align}
|A_h(\pi_h u^e-u^e,\phi_h)|
&\lesssim
\tn \pi_h u^e-u^e \tn_h \tn \phi_h \tn_h
\\
&\lesssim
h^p\|u\|_{H^{p+1}(\Omega)}\|\psi\|_{\Omega_\delta}
\label{eq:interp-continuity-h1}
\end{align}
For the consistency term, integration by parts gives
\begin{align}
A_h(u^e,\phi_h)-l_h(\phi_h)
&=
-\underbrace{(u^e,\Sigma_{n_\delta}(\phi_h))_{\partial\Omega_\delta}}_{I}
-\underbrace{(\Delta u^e+f_\delta,\phi_h)_{\Omega_\delta}}_{II}
+\underbrace{s_h(\pi_hu^e,\phi_h)}_{III}
\label{eq:consistency-dual}
\end{align}

\proofparagraph{Term \(I\).}
Using Lemma~\ref{lem:boundary-flux-estimate}, we obtain
\begin{align}
|I|
&=
|(u^e,\Sigma_{n_\delta}(\phi_h))_{\partial\Omega_\delta}|
\\
&\le
\|u^e\|_{H^1(\partial\Omega_\delta)}
\|\Sigma_{n_\delta}(\phi_h)\|_{H^{-1}(\partial\Omega_\delta)}
\\
&\lesssim
\|u^e\|_{H^1(\partial\Omega_\delta)}
\|\psi\|_{\Omega_\delta}
\label{eq:i-flux-duality}
\end{align}
It remains to estimate
\(\|u^e\|_{H^1(\partial\Omega_\delta)} = \bigl( \|u^e\|_{\partial\Omega_\delta}^2 + \|\nabla_{\partial\Omega_\delta}u^e\|_{\partial\Omega_\delta}^2 \bigr)^{1/2}\). Since \(u^e=0\) on \(\partial\Omega\), a mean value estimate along normals gives
\begin{equation}
\|u^e\|_{\partial\Omega_\delta}
\lesssim
\delta\|u^e\|_{W^1_\infty(U_\delta(\partial\Omega))}
\label{eq:u-boundary-h1-l2}
\end{equation}
see \eqref{eq:u-boundary-shift}.
For the tangential derivative term in \(\|u^e\|_{H^1(\partial\Omega_\delta)}\),
we recall that the tangential gradient on \(\partial\Omega_\delta\) can be expressed as \(\nabla_{\partial\Omega_\delta} = P_{\partial\Omega_\delta}\nabla\), where \(P_{\partial\Omega_\delta} = I - n_\delta\otimes n_\delta\) is a projection onto the tangential plane.
Analogously, the tangential gradient on \(\partial\Omega\) is given by \(\nabla_{\partial\Omega}=P_{\partial\Omega}\nabla\), with
\(P_{\partial\Omega}=I-n\otimes n\).
Here and below, \(n\) and \(P_{\partial\Omega}\) on \(\partial\Omega_\delta\)
denote the pullbacks \(n\circ q\) and \(P_{\partial\Omega}\circ q\).
We can then write
\begin{align}
\|\nabla_{\partial\Omega_\delta}u^e\|_{\partial\Omega_\delta}
&=
\|P_{\partial\Omega_\delta}\nabla u^e\|_{\partial\Omega_\delta}
\\
&\le
\|P_{\partial\Omega_\delta}P_{\partial\Omega}\nabla u^e\|_{\partial\Omega_\delta}
+
\|P_{\partial\Omega_\delta}(I-P_{\partial\Omega})\nabla u^e\|_{\partial\Omega_\delta}
\label{eq:tangential-bound-terms}
\\
&\lesssim
\|P_{\partial\Omega}\nabla u^e\|_{\partial\Omega_\delta}
+
\bigl(
\|n_\delta-n\|_{L^\infty(\partial\Omega_\delta)}
+
\|n_\delta-n\|_{L^\infty(\partial\Omega_\delta)}^2
\bigr)
\|\nabla_n u^e\|_{\partial\Omega_\delta}
\\
&\lesssim
\|\nabla_{\partial\Omega}u^e\|_{\partial\Omega_\delta}
+
\delta_n\|\nabla_n u^e\|_{\partial\Omega_\delta}
\label{eq:tangential-bound-h1}
\end{align}
The estimate of the second term in \eqref{eq:tangential-bound-terms} follows from the identity
\begin{align}
P_{\partial\Omega_\delta}(I-P_{\partial\Omega})
&=
P_{\partial\Omega_\delta}(n\otimes n)
\\
&=
(n\otimes n)
-
(n_\delta\cdot n)(n_\delta\otimes n)
\\
&=
(n-n_\delta)\otimes n
+
(1-n_\delta\cdot n)n_\delta\otimes n
\label{eq:proj-identity-h1}
\end{align}
together with
\begin{equation}
1-n_\delta\cdot n
=
\frac12\|n-n_\delta\|_{\mathbb R^d}^2
\end{equation}
By the normal approximation assumption \eqref{eq:boundary-errors} we arrive at \eqref{eq:tangential-bound-h1}.
Since \(\nabla_{\partial\Omega}u^e=0\) on
\(\partial\Omega\), which directly follows from \(u^e=u=0\) on \(\partial\Omega\), another mean value estimate gives
\begin{equation}
\|\nabla_{\partial\Omega}u^e\|_{\partial\Omega_\delta}
\lesssim
\delta\|u^e\|_{W^2_\infty(U_\delta(\partial\Omega))}
\label{eq:tangential-shift-h1}
\end{equation}
Furthermore, by the uniform trace estimate on \(\partial\Omega_\delta\) and
the stability of the extension,
\begin{equation}
\|\nabla_n u^e\|_{\partial\Omega_\delta}
\lesssim
\|u\|_{H^{p+1}(\Omega)}
\label{eq:normal-trace-h1}
\end{equation}
Combining \eqref{eq:u-boundary-h1-l2}--\eqref{eq:normal-trace-h1}, we get
\begin{equation}
\|u^e\|_{H^1(\partial\Omega_\delta)}
\lesssim
\delta\|u^e\|_{W^2_\infty(U_\delta(\partial\Omega))}
+
\delta_n\|u\|_{H^{p+1}(\Omega)}
\label{eq:u-boundary-h1-final}
\end{equation}
Thus
\begin{equation}
|I|
\lesssim
\Bigl(
\delta\|u^e\|_{W^2_\infty(U_\delta(\partial\Omega))}
+
\delta_n\|u\|_{H^{p+1}(\Omega)}
\Bigr)
\|\psi\|_{\Omega_\delta}
\label{eq:i-est-final}
\end{equation}

\proofparagraph{Term \(II\).}
Similarly to the bound of the corresponding term in the standard energy estimate \eqref{eq:energy-norm-middle-term}, we by
using that \(\Delta u^e+f_\delta=0\) in \(\Omega\), Lemma~\ref{lem:scaling-lemma}, and
\eqref{eq:dual-stab}, obtain
\begin{align}
|II|
&=
|(\Delta u^e+f_\delta,\phi_h)_{\Omega_\delta}|
\\
&\lesssim
\|\Delta u^e+f_\delta\|_{\Omega_\delta\setminus\Omega}
\|\phi_h\|_{\Omega_\delta\setminus\Omega}
\\
&\lesssim
h^{r-1}\|u^e\|_{H^r(U_\delta(\partial\Omega))}
\tn \phi_h \tn_h
\\
&\lesssim
h^{r-1}\|u^e\|_{H^r(U_\delta(\partial\Omega))}
\|\psi\|_{\Omega_\delta}
\label{eq:ii-est-final}
\end{align}
where \(r=\max(p+1,2+\epsilon)\).

\proofparagraph{Term \(III\).}
Using weak consistency of the stabilization \eqref{eq:weak-consistency} and \eqref{eq:dual-stab},
we obtain
\begin{align}
|III|
&=
|s_h(\pi_hu^e,\phi_h)|
\\
&\le
\|\pi_hu^e\|_{s_h}\|\phi_h\|_{s_h}
\\
&\lesssim
h^p\|u\|_{H^{p+1}(\Omega)}\|\psi\|_{\Omega_\delta}
\label{eq:iii-est-final}
\end{align}

\proofparagraph{Conclusion.}
Combining \eqref{eq:error-rep}, \eqref{eq:interp-continuity-h1},
\eqref{eq:i-est-final}, \eqref{eq:ii-est-final}, and
\eqref{eq:iii-est-final}, we find that
\begin{align}
| (\nabla(\pi_h u^e-u_h),\psi)_{\Omega_\delta} |
&\lesssim
\Bigl(
(h^p+\delta_n)\|u\|_{H^{p+1}(\Omega)}
+h^{r-1}\|u^e\|_{H^r(U_\delta(\partial\Omega))}
\label{eq:discrete-error-dual-bound}
\\
&\qquad
+
\delta\|u^e\|_{W^2_\infty(U_\delta(\partial\Omega))}
\Bigr)
\|\psi\|_{\Omega_\delta}
\nonumber
\end{align}
Taking the supremum over all \(\psi\in [C_0^\infty(\Omega_\delta)]^d\) with
\(\|\psi\|_{\Omega_\delta}=1\), and using the density of
\([C_0^\infty(\Omega_\delta)]^d\) in \([L^2(\Omega_\delta)]^d\), gives
\begin{align}
\|\nabla(\pi_h u^e-u_h)\|_{\Omega_\delta}
&\lesssim
(h^p+\delta_n)\|u\|_{H^{p+1}(\Omega)}
+h^{r-1}\|u^e\|_{H^r(U_\delta(\partial\Omega))}
\label{eq:discrete-error-h1-final}
\\
&\qquad
+
\delta\|u^e\|_{W^2_\infty(U_\delta(\partial\Omega))}
\nonumber
\end{align}
Together with \eqref{eq:error-split} and \eqref{eq:interpol-h1}, this proves
\eqref{eq:h1-error}.
\end{proof}

\paragraph{Uniform Elliptic Regularity.}
We assume that the perturbed domains $\Omega_\delta$ satisfy the following uniform elliptic regularity estimate: for each $g\in L^2(\Omega_\delta)$, the solution $\phi\in H^1_0(\Omega_\delta)$ of
\(-\Delta \phi = g\) in \(\Omega_\delta\) satisfies
\begin{equation}
\|\phi\|_{H^2(\Omega_\delta)}
\lesssim
\|g\|_{\Omega_\delta}
\label{eq:uniform-elliptic-regularity}
\end{equation}
with a constant independent of $h$ and $\delta$.

\begin{thm}[$L^2$ Error Estimate]
\label{thm:l2-error}
Assume that the geometric approximation assumptions \eqref{eq:boundary-errors} and \eqref{eq:domain-mismatch-assumption} hold, that \(\delta \lesssim h\), that
\(u \in H^{p+1}(\Omega)\),
that its extension satisfies
\(u^e \in W^1_\infty(U_\delta(\partial\Omega)) \cap H^{2+\epsilon}(U_\delta(\partial\Omega))\) for \(0 < \epsilon < 1/2\), and that the uniform elliptic regularity estimate \eqref{eq:uniform-elliptic-regularity} holds on \(\Omega_\delta\).
Then there is a constant, independent of \(h\), \(\delta\), \(\delta_n\), and the cut configuration, such that
\begin{equation}
\|u^e-u_h\|_{\Omega_\delta}
\lesssim
h^{p+1}
\|u\|_{H^{p+1}(\Omega)}
+
(h^{r}+\delta^{r-1})
\|u^e\|_{H^{r}(U_\delta(\partial\Omega))}
+
\delta
\|u^e\|_{W^1_\infty(U_\delta(\partial\Omega))}
\label{eq:l2-error}
\end{equation}
where $r = \max(p+1,2+\epsilon)$.
\end{thm}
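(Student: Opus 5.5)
A duality argument on the perturbed domain \(\Omega_\delta\) will give the estimate. Split \(u^e-u_h=(u^e-\pi_hu^e)+(\pi_hu^e-u_h)\); the first part is \(\lesssim h^{p+1}\|u\|_{H^{p+1}(\Omega)}\) by \eqref{eq:interpolation-elm} and extension stability, so it remains to bound \(e_h:=\pi_hu^e-u_h\), or equivalently \(e=u^e-u_h\). Let \(\phi\in H^1_0(\Omega_\delta)\) solve \(-\Delta\phi=e\) in \(\Omega_\delta\). By \eqref{eq:uniform-elliptic-regularity}, \(\|\phi\|_{H^2(\Omega_\delta)}\lesssim\|e\|_{\Omega_\delta}\). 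We extend \(\phi\) by \(E_\delta\). Since \(\phi=0\) on \(\partial\Omega_\delta\), integrating by parts yields the exact identity
\begin{equation*}
\|e\|_{\Omega_\delta}^2=(\nabla e,\nabla\phi)_{\Omega_\delta}-(e,\nabla_{n_\delta}\phi)_{\partial\Omega_\delta}=A_h(e,\phi)-s_h(e,\phi),
\end{equation*}
where we used \((\nabla_{n_\delta}e,\phi)_{\partial\Omega_\delta}=0\) and \(h^{-1}(e,\phi)_{\partial\Omega_\delta}=0\). This is the point where the vanishing of the dual solution on \(\partial\Omega_\delta\) is used: no boundary consistency term in \(\phi\) appears.

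Next we insert \(\pi_h\phi\) and use the Galerkin relation \(A_h(u_h,\pi_h\phi)=l_h(\pi_h\phi)\):
\begin{equation*}
A_h(e,\phi)=A_h(e,\phi-\pi_h\phi)+\bigl(A_h(u^e,\pi_h\phi)-l_h(\pi_h\phi)\bigr).
\end{equation*}
The first term is bounded by continuity, Theorem~\ref{thm:energy-error}, and \eqref{eq:interpol-energy} with \(s=2\). This gives \(\tn e\tn_h\, h\|\phi\|_{H^2}\), whose leading contributions are \(h^{p+1}\) and \(h^r\), but also \(h^{1/2}\delta\). The term \(h^{1/2}\delta\) is not acceptable, since the target is \(\delta\). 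To remove it, I will not apply continuity to the boundary part of \(A_h\) applied to \(u^e\). Instead I split \(e=(u^e-\pi_hu^e)+e_h\). The \(u^e-\pi_hu^e\) part is fine by continuity. For the \(e_h\) part, I use symmetry and write \(A_h(e_h,\phi-\pi_h\phi)\) as \(A_h(e_h,\phi)-A_h(e_h,\pi_h\phi)\), and then treat both pieces through the consistency structure. Alternatively, I bound directly using \(\phi=0\) on \(\partial\Omega_\delta\), which gives \(\|\phi-\pi_h\phi\|_{\partial\Omega_\delta}=\|\pi_h\phi\|_{\partial\Omega_\delta}\lesssim h^{3/2}\|\phi\|_{H^2}\). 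The stabilization term \(s_h(e,\phi)\) is handled with \eqref{eq:weak-consistency}: \(\|\pi_h\phi\|_{s_h}\lesssim h\|\phi\|_{H^2}\).

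The consistency term, as in \eqref{eq:consistency-dual}, equals
\begin{equation*}
-(u^e,\Sigma_{n_\delta}(\pi_h\phi))_{\partial\Omega_\delta}-(\Delta u^e+f_\delta,\pi_h\phi)_{\Omega_\delta}+s_h(\pi_hu^e,\pi_h\phi).
\end{equation*}
\begin{itemize}
\item The stabilization term is \(\lesssim h^p\cdot h\) by \eqref{eq:weak-consistency}.
\item For the residual term I use Lemma~\ref{lem:scaling-lemma} to get \(\delta^{r-2}\|u^e\|_{H^r}\). I then apply Lemma~\ref{lem:strip} to \(\pi_h\phi\). Since \(\phi=0\) on \(\partial\Omega_\delta\), the boundary part is \(\|\pi_h\phi\|_{\partial\Omega_\delta}=\|\pi_h\phi-\phi\|_{\partial\Omega_\delta}\lesssim h^{3/2}\|\phi\|_{H^2}\), and the gradient part is \(\delta\|\nabla\pi_h\phi\|_{\Omega_\delta\setminus\Omega}\lesssim\delta\|\phi\|_{H^2}\). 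Together this gives \(\delta^{r-1}+\delta^{r-3/2}h^{3/2}\lesssim\delta^{r-1}+h^r\) by Young.
\item The boundary term is the crux. We write \(\Sigma_{n_\delta}(\pi_h\phi)=\nabla_{n_\delta}\pi_h\phi-\beta h^{-1}\pi_h\phi\). The penalty part is \(h^{-1}\|u^e\|_{\partial\Omega_\delta}\,h^{3/2}\|\phi\|_{H^2}\lesssim h^{1/2}\delta\). The flux part is \(\|u^e\|_{\partial\Omega_\delta}\|\nabla\pi_h\phi\|_{\partial\Omega_\delta}\lesssim\delta\|u^e\|_{W^1_\infty}\|\phi\|_{H^2}\), using the trace inequality \eqref{eq:cut-trace-inequality} and \(H^2\) stability. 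Both are bounded by \(\delta\), which is the desired geometric contribution, with no normal error appearing.
\end{itemize}

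The main obstacle is the term \(A_h(e,\phi-\pi_h\phi)\). A naive energy bound produces \(h\cdot h^{-1/2}\delta=h^{1/2}\delta\). That is still \(\lesssim\delta\) because \(h\lesssim 1\), so it is in fact acceptable, but keeping track of it carefully is what makes the geometric term appear additively and decoupled from \(h\). Collecting all the bounds, we divide by \(\|e\|_{\Omega_\delta}\) and obtain \eqref{eq:l2-error}.
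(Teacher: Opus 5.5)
Your proposal is correct and follows the paper's proof essentially step for step: the same dual problem on $\Omega_\delta$, the same identity $\|e\|_{\Omega_\delta}^2=a_h(e,\phi-\pi_h\phi^e)+a_h(e,\pi_h\phi^e)$ (your $A_h(e,\phi)-s_h(e,\phi)$ rearranges to exactly this), and the same three-term consistency split, with $\phi=0$ on $\partial\Omega_\delta$ exploited in the penalty part of term $I$ and in the strip estimate for term $II$. The detour in your middle paragraph is unnecessary: the paper bounds $a_h(e,\phi-\pi_h\phi^e)$ by continuity and absorbs the resulting $h^{1/2}\delta$ via $h^{1/2}\delta\le\delta$, exactly as you conclude in your final paragraph.
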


\begin{rem}[Condition for Optimal $L^2$ Convergence]
The estimate in Theorem~\ref{thm:l2-error} holds under the same assumptions as the energy norm estimate. It therefore guarantees optimal order convergence of order \(h^{p+1}\) in the \(L^2\)-norm provided that the boundary location error additionally satisfies
\(\delta \lesssim h^{p+1}\).
\end{rem}

\begin{proof}
Set $e_h = u^e-u_h$
and let \(\phi \in H^1_0(\Omega_\delta)\) solve the dual problem
\begin{equation}
-\Delta \phi = e_h
\qquad \text{in } \Omega_\delta
\label{eq:l2-dual}
\end{equation}
By the uniform elliptic regularity estimate \eqref{eq:uniform-elliptic-regularity}, the dual solution \(\phi\) satisfies
\begin{equation}
\|\phi\|_{H^2(\Omega_\delta)} \lesssim \|e_h\|_{\Omega_\delta}
\label{eq:l2-dual-regularity}
\end{equation}
Using the dual problem, integration by parts, and that $\phi=0$ on $\partial\Omega_\delta$, we can express the square of the $L^2$ error as
\begin{align}
\|e_h\|_{\Omega_\delta}^2
&=
(e_h,e_h)_{\Omega_\delta}
\label{eq:l2-start}
\\
&=
(e_h,-\Delta\phi)_{\Omega_\delta}
\label{eq:l2-start-2}
\\
&=
(\nabla e_h,\nabla \phi)_{\Omega_\delta}
-
(e_h,\nabla_{n_\delta}\phi)_{\partial\Omega_\delta}
\\&=
a_h(e_h,\phi)
\\&=
a_h(e_h,\phi-\pi_h\phi^e)
+
a_h(e_h,\pi_h\phi^e)
\label{eq:l2-parts}
\end{align}
Here \(\phi^e=E_\delta\phi\) denotes the extension of \(\phi\) to \(\mathbb R^d\). By the uniform stability of \(E_\delta\) \eqref{eq:ext-stab-delta}, and by restriction to the active domain, we have \(\|\phi^e\|_{H^2(\Omega_h)}\le \|\phi^e\|_{H^2(\mathbb R^d)} \lesssim\|\phi\|_{H^2(\Omega_\delta)}\).

The first term in \eqref{eq:l2-parts} is estimated using continuity of \(a_h\), interpolation \eqref{eq:interpol-energy}, and the energy error estimate Theorem~\ref{thm:energy-error}, yielding
\begin{align}
|a_h(e_h,\phi-\pi_h\phi^e)|
&\lesssim
\tn e_h\tn_h \tn \phi^e-\pi_h\phi^e \tn_h
\label{eq:l2-cont-a}
\\
&\lesssim
h \tn e_h\tn_h \|\phi\|_{H^2(\Omega_\delta)}
\label{eq:l2-cont-a-2}
\\
&\lesssim
\Bigl(
h^{p+1} \|u\|_{H^{p+1}(\Omega)}
+
h^{r} \|u^e\|_{H^r(U_\delta(\partial\Omega))}
\\&\qquad\qquad\qquad\quad \nonumber
+
h^{1/2}\delta \|u^e\|_{W^1_\infty(U_\delta(\partial\Omega))}
\Bigr)
\|\phi\|_{H^2(\Omega_\delta)}
\label{eq:l2-cont-a-final}
\end{align}
where $r = \max(p+1,2+\epsilon)$.

For the second term in \eqref{eq:l2-parts}, we get
\begin{align}
a_h(e_h,\pi_h\phi^e)
&=
a_h(u^e,\pi_h\phi^e) - a_h(u_h,\pi_h\phi^e) - s_h(u_h,\pi_h\phi^e) + s_h(u_h,\pi_h\phi^e)
\\&=
a_h(u^e,\pi_h\phi^e) -l_h(\pi_h\phi^e) + s_h(u_h,\pi_h\phi^e)
\\&=
\underbrace{A_h(u^e,\pi_h\phi^e) -l_h(\pi_h\phi^e)}_{\bigstar} + s_h(u_h - u^e,\pi_h\phi^e)
\label{eq:l2-parts-2}
\end{align}
where we for the last term use the weak consistency \eqref{eq:weak-consistency}, interpolation \eqref{eq:interpol-energy}, and the energy error estimate Theorem~\ref{thm:energy-error}, yielding
\begin{align}
|s_h(u_h - u^e,\pi_h\phi^e)|
&\le
\|u_h - u^e\|_{s_h} \|\pi_h\phi^e\|_{s_h}
\\
&\lesssim
\tn u_h - u^e \tn_h  h \|\phi\|_{H^2(\Omega_\delta)}
\\
&\lesssim
\Bigl(
h^{p+1} \|u\|_{H^{p+1}(\Omega)}
+
h^{r} \|u^e\|_{H^r(U_\delta(\partial\Omega))}
\\&\qquad\qquad\qquad\qquad \nonumber
+
h^{1/2}\delta \|u^e\|_{W^1_\infty(U_\delta(\partial\Omega))}
\Bigr)
\|\phi\|_{H^2(\Omega_\delta)}
\end{align}

To estimate the consistency term $\bigstar$ in \eqref{eq:l2-parts-2},
we make use of the same argument as in the proof of the improved $H^1$ estimate, integrating by parts and splitting it into three terms
\begin{align}
\bigstar&=A_h(u^e,\pi_h\phi^e) - l_h(\pi_h\phi^e)  
\\
&=
- \underbrace{(u^e, \Sigma_{n_\delta}(\pi_h\phi^e))_{\partial \Omega_\delta}}_{I} 
- \underbrace{(\Delta u^e + f_\delta,\pi_h\phi^e)_{\Omega_\delta}}_{II}
+ \underbrace{s_h(\pi_h u^e,\pi_h\phi^e)}_{III}
\label{eq:consistency-dual}
\end{align}
The key property we will utilize is that the dual solution $\phi$ is zero on $\partial\Omega_\delta$.

\proofparagraph{Term $I$.}
Using the definition of the Nitsche flux \eqref{eq:flux}, we split
\begin{align}
|I|
&\leq
|(u^e,\nabla_{n_\delta} \pi_h\phi^e)_{\partial \Omega_\delta}|
+
\beta h^{-1}|(u^e,\pi_h\phi^e)_{\partial \Omega_\delta}|
\label{eq:l2-termI-split}
\end{align}
Since \(\phi = 0\) on \(\partial\Omega_\delta\), we have
\begin{equation}
\pi_h\phi^e = \pi_h\phi^e - \phi
\qquad \text{on } \partial\Omega_\delta
\label{eq:l2-dual-zero-boundary}
\end{equation}
Hence, using interpolation, trace inequalities, and \eqref{eq:u-boundary-shift}, we obtain
\begin{align}
h^{-1}|(u^e,\pi_h\phi^e)_{\partial \Omega_\delta}|
&=
h^{-1}|(u^e,\pi_h\phi^e-\phi)_{\partial \Omega_\delta}|
\\
&\lesssim
h^{-1/2}\|u^e\|_{\partial \Omega_\delta}
h^{-1/2}\|\pi_h\phi^e-\phi\|_{\partial \Omega_\delta}
\\
&\lesssim
\delta h^{1/2}
\|u^e\|_{W^1_\infty(U_\delta(\partial\Omega))}
\|\phi\|_{H^2(\Omega_\delta)}
\label{eq:l2-termI-penalty}
\end{align}
For the normal derivative term, adding and subtracting \(\phi\), we get
\begin{align}
|(u^e,\nabla_{n_\delta} \pi_h\phi^e)_{\partial \Omega_\delta}|
&\leq
|(u^e,\nabla_{n_\delta}(\pi_h\phi^e-\phi))_{\partial \Omega_\delta}|
+
|(u^e,\nabla_{n_\delta}\phi)_{\partial \Omega_\delta}|
\label{eq:l2-termI-normal-split}
\end{align}
Using interpolation and trace inequalities for the first term, and a trace estimate for the second term, we obtain
\begin{align}
|(u^e,\nabla_{n_\delta}(\pi_h\phi^e-\phi))_{\partial \Omega_\delta}|
&\lesssim
\|u^e\|_{\partial \Omega_\delta}
\|\nabla_{n_\delta}(\pi_h\phi^e-\phi)\|_{\partial \Omega_\delta}
\\
&\lesssim
\delta h^{1/2}
\|u^e\|_{W^1_\infty(U_\delta(\partial\Omega))}
\|\phi\|_{H^2(\Omega_\delta)}
\label{eq:l2-termI-normal-interp}
\end{align}
and
\begin{align}
|(u^e,\nabla_{n_\delta}\phi)_{\partial \Omega_\delta}|
&\lesssim
\|u^e\|_{\partial \Omega_\delta}
\|\nabla_{n_\delta}\phi\|_{\partial \Omega_\delta}
\\
&\lesssim
\delta
\|u^e\|_{W^1_\infty(U_\delta(\partial\Omega))}
\|\phi\|_{H^2(\Omega_\delta)}
\label{eq:l2-termI-normal-dual}
\end{align}
Combining \eqref{eq:l2-termI-split}, \eqref{eq:l2-termI-penalty}, \eqref{eq:l2-termI-normal-split}, \eqref{eq:l2-termI-normal-interp}, and \eqref{eq:l2-termI-normal-dual}, we conclude that
\begin{equation}
|I|
\lesssim
\delta
\|u^e\|_{W^1_\infty(U_\delta(\partial\Omega))}
\|\phi\|_{H^2(\Omega_\delta)}
\label{eq:l2-termI-final}
\end{equation}

\proofparagraph{Term $II$.}
Since \(-\Delta u = f\) in \(\Omega\), the residual is supported in the domain mismatch region, and therefore
\begin{equation}
|II|
\leq
\|\Delta u^e + f_\delta\|_{\Omega_\delta\setminus\Omega}
\|\pi_h\phi^e\|_{\Omega_\delta\setminus\Omega}
\label{eq:l2-termII-start}
\end{equation}
We next estimate the dual factor. Using the domain mismatch estimate Lemma~\ref{lem:strip} together with \eqref{eq:l2-dual-zero-boundary}, interpolation, and elliptic regularity, we obtain
\begin{align}
\|\pi_h\phi^e\|_{\Omega_\delta\setminus\Omega}
&\lesssim
\delta^{1/2}\|\pi_h\phi^e\|_{\partial\Omega_\delta}
+
\delta\|\nabla \pi_h\phi^e\|_{\Omega_\delta}
\\
&=
\delta^{1/2}\|\pi_h\phi^e-\phi\|_{\partial\Omega_\delta}
+
\delta\|\nabla \pi_h\phi^e\|_{\Omega_\delta}
\\
&\lesssim
(\delta^{1/2}h^{3/2}+\delta)
\|\phi\|_{H^2(\Omega_\delta)}
\label{eq:l2-termII-dual-strip}
\end{align}
Using the residual estimate from the proof of the standard energy estimate with \(s=\max(p-1,\epsilon)\), we get
\begin{align}
\|\Delta u^e + f_\delta\|_{\Omega_\delta\setminus\Omega}
&\lesssim
\delta^s \|\Delta u^e + f_\delta\|_{H^s(U_\delta(\partial\Omega))}
\lesssim
\delta^s \|u^e\|_{H^{s+2}(U_\delta(\partial\Omega))}
\label{eq:l2-termII-residual}
\end{align}
Combining \eqref{eq:l2-termII-start}, \eqref{eq:l2-termII-dual-strip}, and \eqref{eq:l2-termII-residual}, and using \(\delta\lesssim h\), we obtain
\begin{align}
|II|
&\lesssim
(\delta^{s+1/2}h^{3/2}+\delta^{s+1})
\|u^e\|_{H^{s+2}(U_\delta(\partial\Omega))}
\|\phi\|_{H^2(\Omega_\delta)}
\\
&\lesssim
(h^{s+2}+\delta^{s+1})
\|u^e\|_{H^{s+2}(U_\delta(\partial\Omega))}
\|\phi\|_{H^2(\Omega_\delta)}
\label{eq:l2-termII-final}
\end{align}

\proofparagraph{Term $III$.}
Using Cauchy--Schwarz, the weak consistency estimate \eqref{eq:weak-consistency} for the primal and dual variables, and elliptic regularity, we get
\begin{align}
|III|
&=
|s_h(\pi_h u^e,\pi_h\phi^e)|
\\
&\leq
\|\pi_h u^e\|_{s_h}
\|\pi_h\phi^e\|_{s_h}
\\
&\lesssim
h^{p+1} \|u\|_{H^{p+1}(\Omega)}
\|\phi\|_{H^2(\Omega_\delta)}
\label{eq:l2-termIII-final}
\end{align}

Collecting the estimates \eqref{eq:l2-termI-final}, \eqref{eq:l2-termII-final}, and \eqref{eq:l2-termIII-final} in \eqref{eq:consistency-dual}, we obtain
\begin{equation}
|\bigstar|
\lesssim
\Bigl(
h^{p+1}
\|u\|_{H^{p+1}(\Omega)}
+
(h^r+\delta^{r-1})
\|u^e\|_{H^{r}(U_\delta(\partial\Omega))}
+
\delta
\|u^e\|_{W^1_\infty(U_\delta(\partial\Omega))}
\Bigr)
\|\phi\|_{H^2(\Omega_\delta)}
\label{eq:l2-consistency-final}
\end{equation}
where \(r = s+2 = \max(p+1,2+\epsilon)\).
Substituting this bound and the estimate of the stabilization term in \eqref{eq:l2-parts-2}, and then using \eqref{eq:l2-cont-a-final} and the dual regularity estimate \eqref{eq:l2-dual-regularity}, yields
\begin{align}
\label{eq:l2-before-division}
\|e_h\|_{\Omega_\delta}^2
&\lesssim
\Bigl(
h^{p+1} \|u\|_{H^{p+1}(\Omega)}
+
(h^{r}+\delta^{r-1}) \|u^e\|_{H^r(U_\delta(\partial\Omega))}
\\&\qquad\qquad\qquad\qquad \nonumber
+
(\delta + h^{1/2}\delta) \|u^e\|_{W^1_\infty(U_\delta(\partial\Omega))}
\Bigr)
\|e_h\|_{\Omega_\delta}
\end{align}
Dividing by \(\|e_h\|_{\Omega_\delta}\) and using \(h^{1/2}\delta \leq \delta\) completes the proof of \eqref{eq:l2-error}.
\end{proof}

\section{Numerical Examples}
\label{sec:numer-examples}

We present numerical examples that illustrate the error estimates derived in the previous sections. The examples demonstrate the convergence behavior for different polynomial orders and error norms, as well as the effect of boundary perturbation and normal approximation errors.

\paragraph{Summary of Geometric Scalings.}
The theoretical results show how the geometric approximation errors enter the
different error estimates. Consequently, they provide sufficient scalings of the
geometry errors to guarantee optimal order convergence in the corresponding
norms.
\begin{itemize}
\item For the energy norm, the boundary location error $\delta$ enters with an
\(h^{-1/2}\)-amplification. The estimate guarantees optimal order
convergence of order \(h^p\) if
\(\delta \lesssim h^{p+1/2}\), and there is no explicit contribution from the normal
approximation error \(\delta_n\).

\item For the \(H^1\)-seminorm, the boundary location and normal approximation
errors enter additively. The estimate guarantees optimal order
convergence of order \(h^p\) if
\(\delta \lesssim h^p\) and \(\delta_n \lesssim h^p\).

\item For the \(L^2\)-norm, the geometry contribution $\delta$ appears as a separate
additive term. The estimate guarantees optimal order convergence of
order \(h^{p+1}\) if
\(\delta \lesssim h^{p+1}\), and there is no explicit contribution from the normal
approximation error \(\delta_n\).
\end{itemize}

\paragraph{Implementation and Parameter Choices.}
The method is implemented in MATLAB in two space dimensions and the linear system of equations is solved using a direct solver (MATLAB's \verb+\+ operator), and we refer to \cite{MR3682761} for details on implementation.

In all experiments we use tensor product Lagrange basis functions of degree $p=1,2,3$ on a uniform background grid of mesh size $h$. The geometry of the perturbed domain $\Omega_\delta$ is described as a high resolution polygon.

For the penalty parameter $\beta$ we choose a fixed value $\beta=25p^2$, and the ghost penalty stabilization parameter as $\gamma_j = \frac{0.01}{((j-1)!)^2 j}$.

\begin{rem}[Beyond the Smooth Setting]
The analysis assumes that both the exact and approximate boundaries are smooth. In the numerical experiments, however, we also consider examples with nonsmooth or only piecewise smooth geometries, such as the square domain in the $\delta$-scaling study and the polygonal boundary arising from the piecewise linear level-set approximation. These experiments are included to illustrate the practical behavior of the method beyond the setting fully covered by the theory.
\end{rem}

\subsection{$\delta$-Scaling Study}

We consider the unit square domain $\Omega = (0,1)^2$ with constant loading function $f = 1$ and homogeneous Dirichlet boundary conditions. One representation of the solution to this problem is the series expansion
\begin{align} \label{eq:series-expansion}
u(x,y) &= u_p(x,y) - \sum_{\substack{n=1 \\ n \,\text{odd}}}^{\infty}
\frac{2}{\pi^3 n^3}
\big( S_n(y)\sin(n\pi x) + S_n(x)\sin(n\pi y)\big)
\end{align}
where $u_p(x,y)$, a particular solution, respectively $S_n(t)$ are given by
\begin{align}
	u_p(x,y) = \frac{x(1-x) + y(1-y)}{4}
	\,,\quad
	S_n(t) &= \frac{\sinh(n\pi(1-t)) + \sinh(n\pi t)}{\sinh(n\pi)}
\end{align}
Since the series \eqref{eq:series-expansion} converges rapidly, we truncate the series after 50 terms (for $p=1,2$) and 100 terms (for $p=3$) and obtain a sufficiently precise approximation of the solution.

Let $(r,\theta)$ denote the polar coordinates of $x \in \mathbb{R}^2$ with respect to the center $x_0=(0.45,0.35)$. 
We define the boundary perturbation $\widetilde{q}_\delta:\partial\Omega \to \partial\Omega_\delta$ by the radial map
\begin{align}
\widetilde{q}_\delta(x) = x + \delta \cos(5\theta)\,\hat{r}
\end{align}
where $\hat{r} = (x - x_0)/|x - x_0|$ is the radial unit vector. 
In this case, the normal approximation error satisfies $\delta_n \sim \delta$.
The perturbed domain $\Omega_\delta$, together with the mesh and numerical solutions for two mesh sizes $h_0$ and $h_1=h_0/2$, is shown in Figure~\ref{fig:delta_study_mesh_solutions}.

In Figure~\ref{fig:delta_study}, we study the convergence in the energy norm, the $H^1$-seminorm, and the $L^2$-norm for polynomial orders $p=1,2,3$ and different scalings $\delta = h^{\alpha}$. 
The results clearly demonstrate that optimal order convergence is obtained when the scalings predicted by the analysis are satisfied, namely $\delta \sim h^{p+1/2}$ for the energy norm, $\delta \sim h^{p}$ for the $H^1$-seminorm, and $\delta \sim h^{p+1}$ for the $L^2$-norm.

\begin{figure}
\centering
\begin{subfigure}[b]{0.32\textwidth}
\centering
\includegraphics[width=\textwidth]{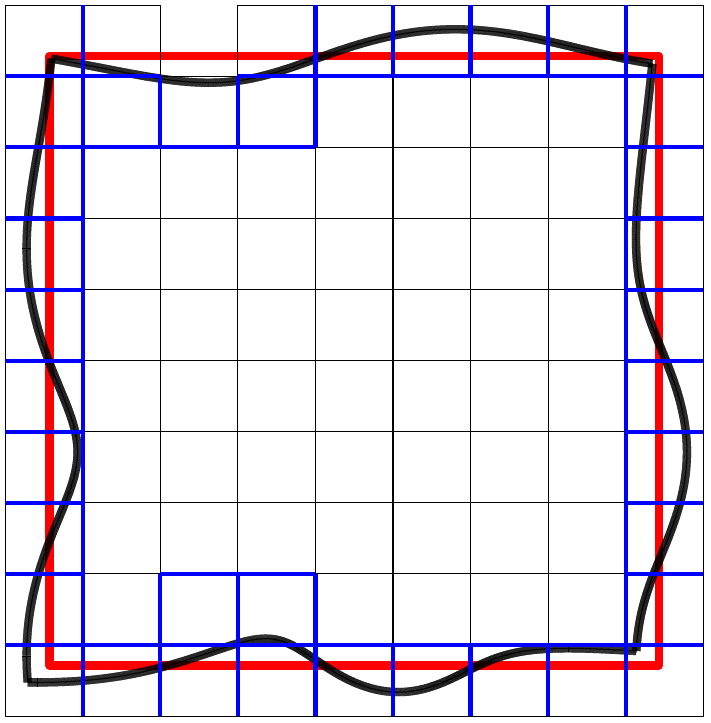}
\caption{$h_0$ mesh}
\label{fig:delta_study_h0_mesh}
\end{subfigure}
\hfill
\begin{subfigure}[b]{0.32\textwidth}
\centering
\includegraphics[width=\textwidth,trim={262 88 233 70},clip]{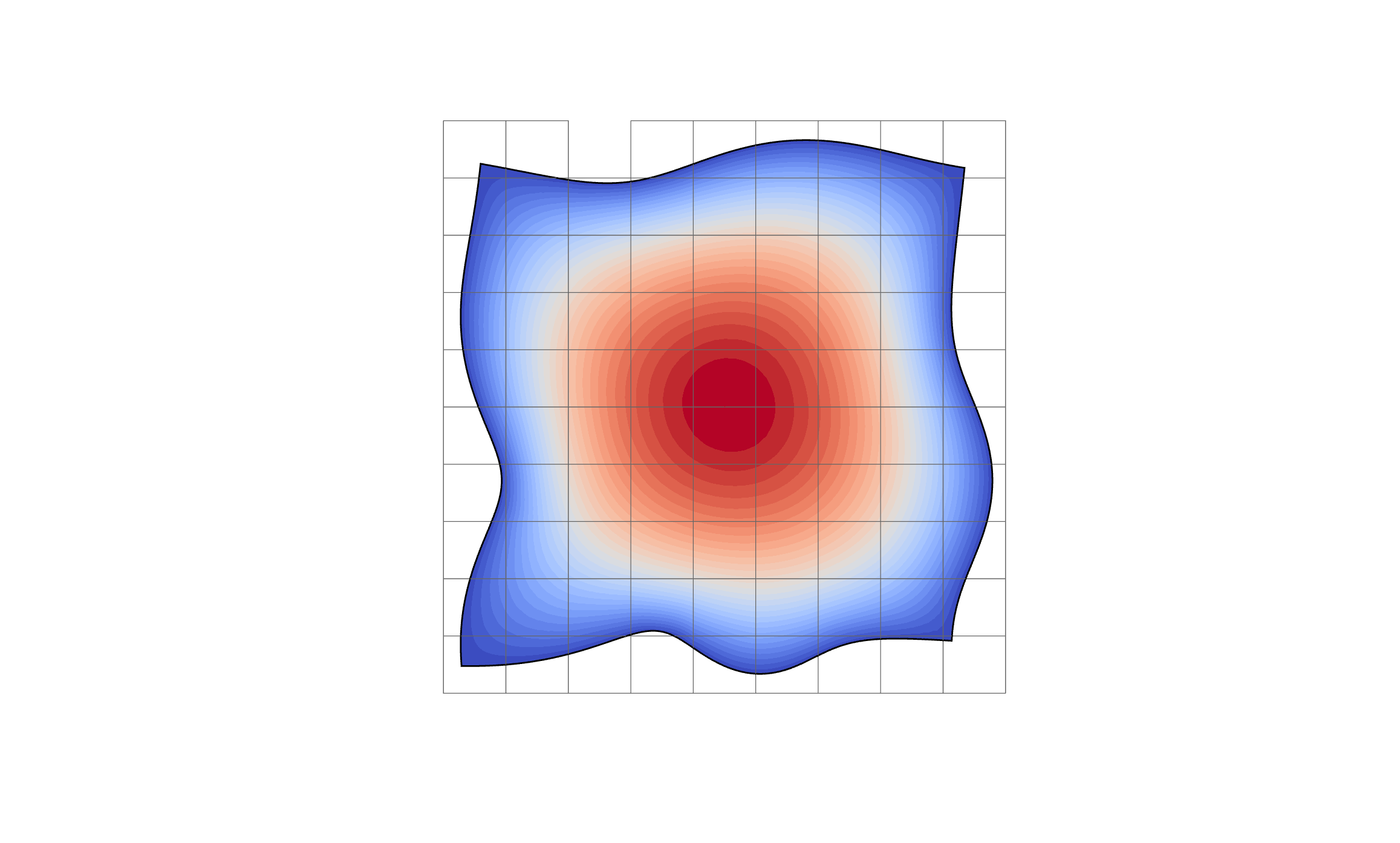}
\caption{$h_0$ solution}
\label{fig:delta_study_h0_solution}
\end{subfigure}
\hfill
\begin{subfigure}[b]{0.32\textwidth}
\centering
\includegraphics[width=\textwidth,trim={262 88 233 70},clip]{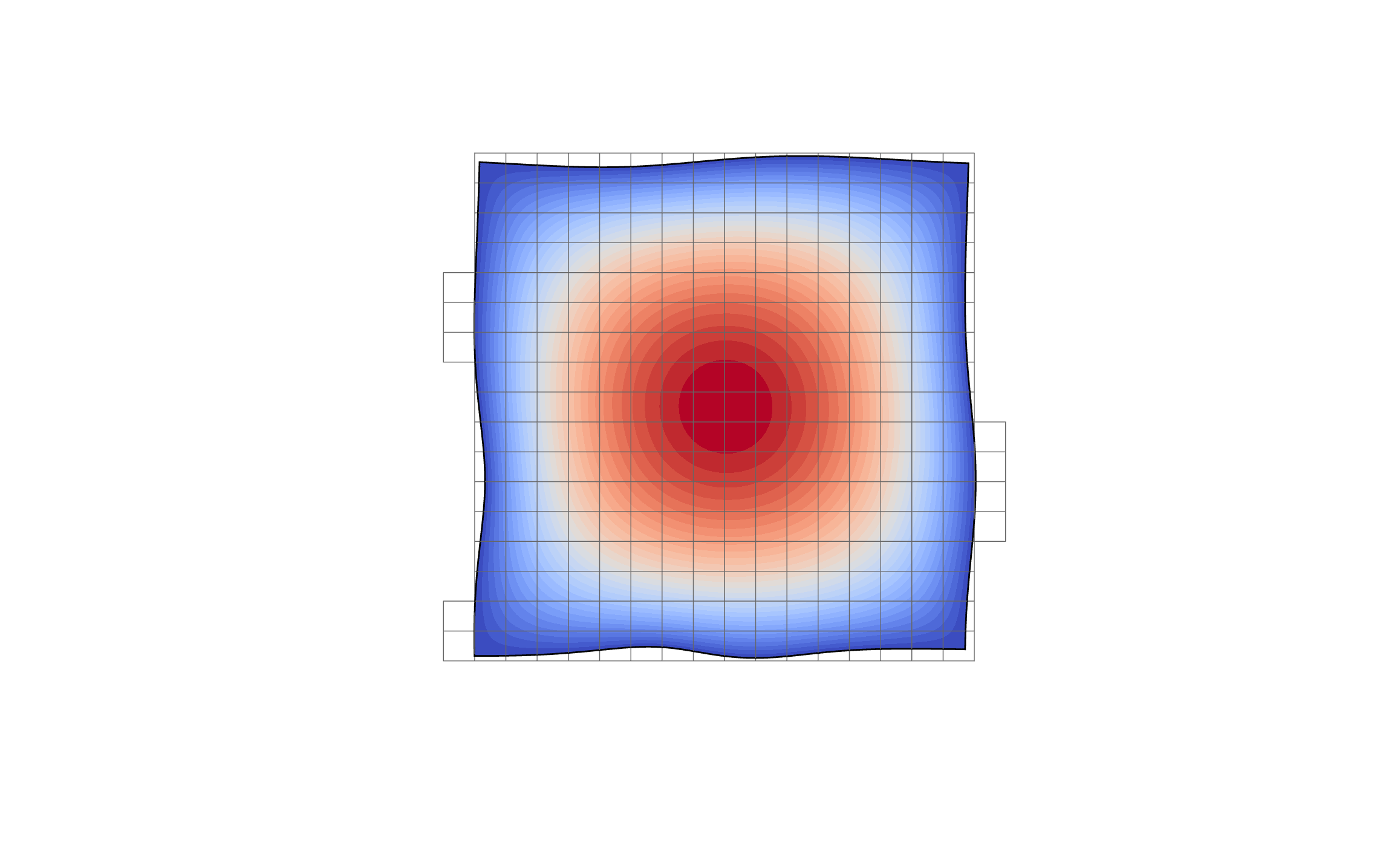}
\caption{$h_1$ solution, $\delta=h^{p+1/2}$}
\label{fig:delta_study_h1_solution}
\end{subfigure}

\caption{\emph{$\delta$-study mesh and solutions.} Illustration of the mesh and numerical solution for two mesh sizes, $h_0$ and $h_1=h_0/2$, when the boundary perturbation is chosen as $\delta = h^{p+1/2}$. The scaling of $\delta$ with $h$ results in different perturbed domains on each mesh.}
\label{fig:delta_study_mesh_solutions}
\end{figure}

\begin{figure}
\centering
\begin{subfigure}[b]{0.32\textwidth}
\centering
\includegraphics[width=\textwidth]{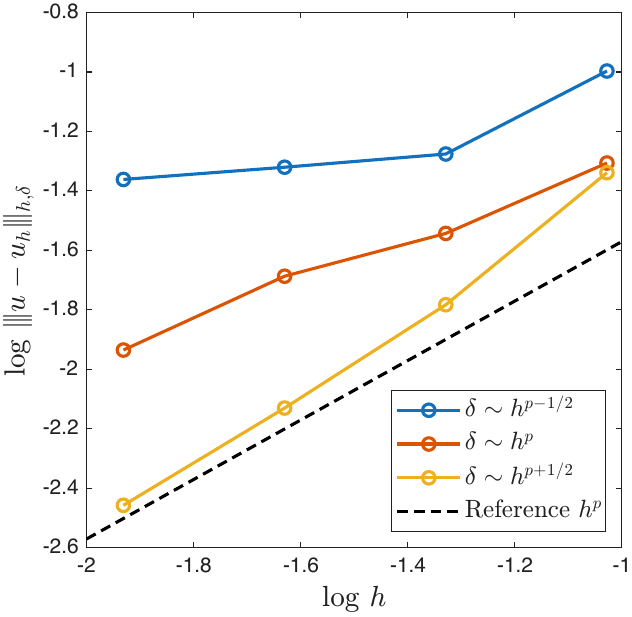}
\caption{$p=1$, Energy norm}
\label{fig:delta_p1_E}
\end{subfigure}
\hfill
\begin{subfigure}[b]{0.32\textwidth}
\centering
\includegraphics[width=\textwidth]{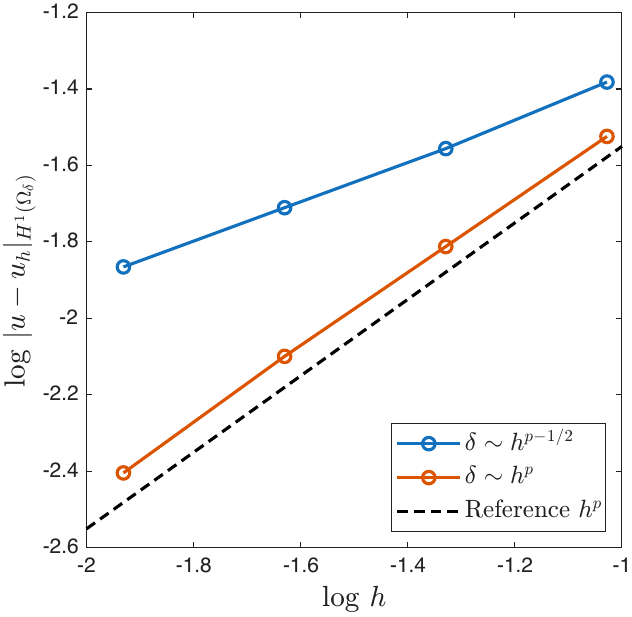}
\caption{$p=1$, $H^1$-seminorm}
\label{fig:delta_p1_H1}
\end{subfigure}
\hfill
\begin{subfigure}[b]{0.32\textwidth}
\centering
\includegraphics[width=\textwidth]{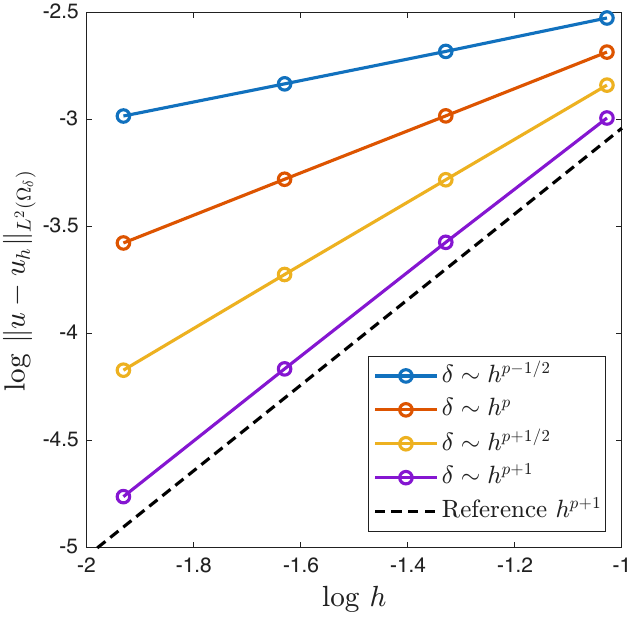}
\caption{$p=1$, $L^2$-norm}
\label{fig:delta_p1_L2}
\end{subfigure}

\vspace{0.5cm}

\begin{subfigure}[b]{0.32\textwidth}
\centering
\includegraphics[width=\textwidth]{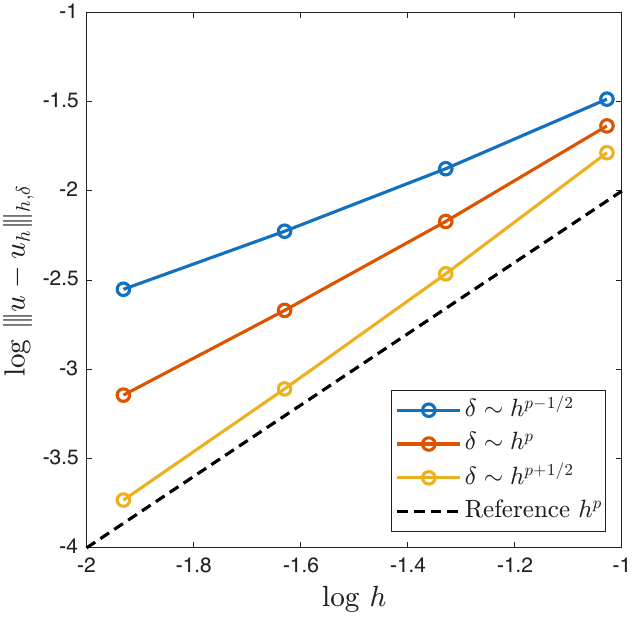}
\caption{$p=2$, Energy norm}
\label{fig:delta_p2_E}
\end{subfigure}
\hfill
\begin{subfigure}[b]{0.32\textwidth}
\centering
\includegraphics[width=\textwidth]{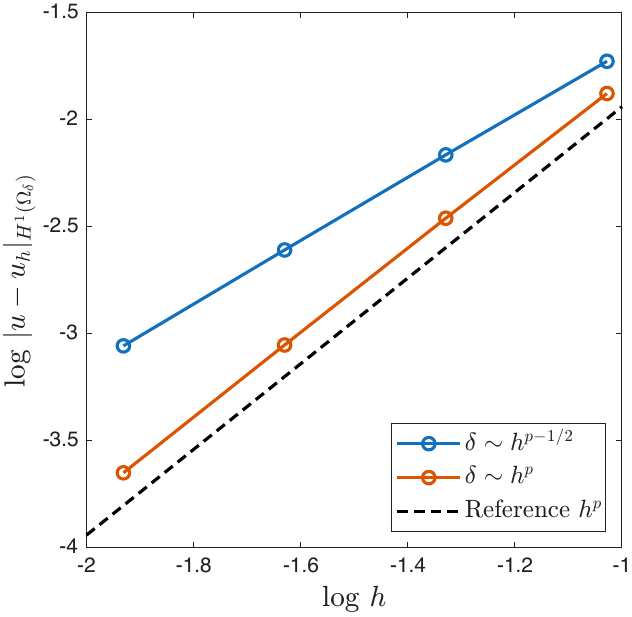}
\caption{$p=2$, $H^1$-seminorm}
\label{fig:delta_p2_H1}
\end{subfigure}
\hfill
\begin{subfigure}[b]{0.32\textwidth}
\centering
\includegraphics[width=\textwidth]{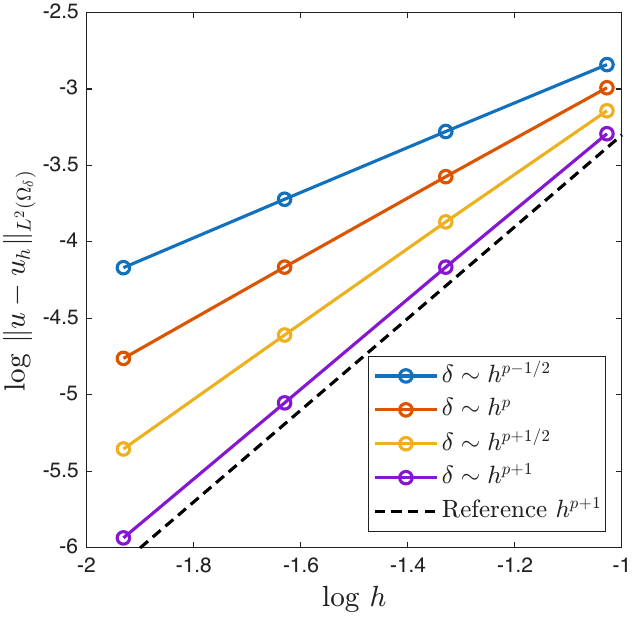}
\caption{$p=2$, $L^2$-norm}
\label{fig:delta_p2_L2}
\end{subfigure}

\vspace{0.5cm}

\begin{subfigure}[b]{0.32\textwidth}
\centering
\includegraphics[width=\textwidth]{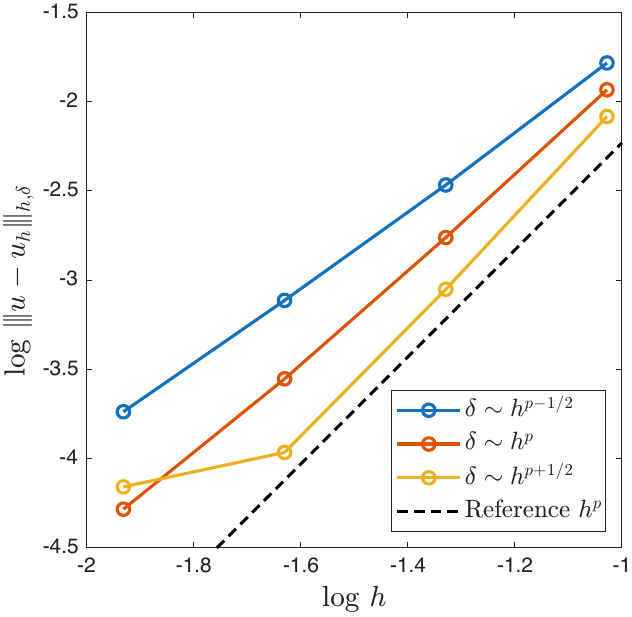}
\caption{$p=3$, Energy norm}
\label{fig:delta_p3_E}
\end{subfigure}
\hfill
\begin{subfigure}[b]{0.32\textwidth}
\centering
\includegraphics[width=\textwidth]{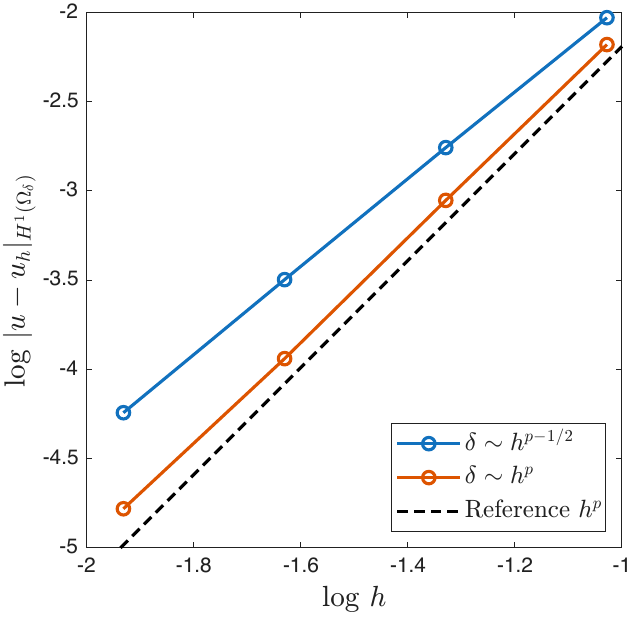}
\caption{$p=3$, $H^1$-seminorm}
\label{fig:delta_p3_H1}
\end{subfigure}
\hfill
\begin{subfigure}[b]{0.32\textwidth}
\centering
\includegraphics[width=\textwidth]{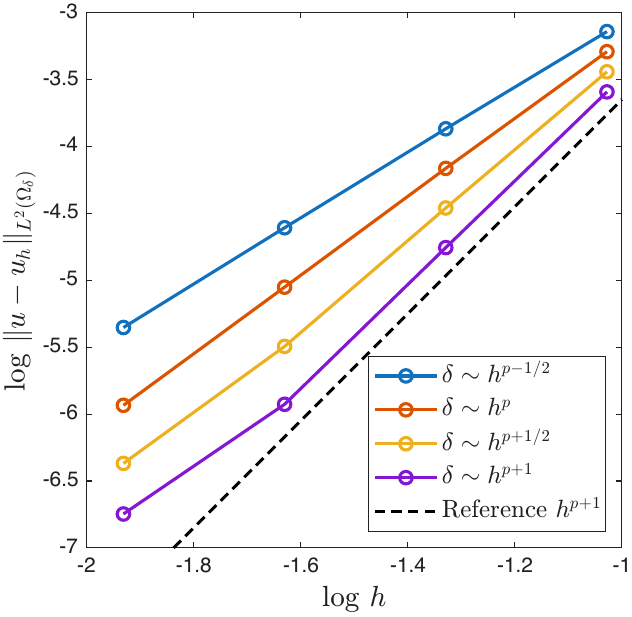}
\caption{$p=3$, $L^2$-norm}
\label{fig:delta_p3_L2}
\end{subfigure}

\caption{\emph{$\delta$-scaling.} Convergence in the energy norm, $H^1$-seminorm, and $L^2$-norm for polynomial orders $p=1,2,3$ and different scalings $\delta = h^\alpha$. The results illustrate how the choice of $\alpha$ affects the convergence rate, and confirm that optimal order convergence is obtained when $\delta$ scales according to the theoretical predictions for each norm. The slight loss of convergence rate observed for $p=3$ in the energy and $L^2$ norms is attributed to numerical inaccuracies in the evaluation of the series expansion used to compute the reference solution.}
\label{fig:delta_study}
\end{figure}

\subsection{Normal Approximation Study}

We consider the unit circle domain $\Omega$ and the solution $u$ to
\begin{align}
	-\Delta u = 1 \quad \text{in $\Omega$}, \qquad u = 0 \quad \text{on $\partial\Omega$}
\end{align}
Since this problem has the analytical solution $u = \frac{1}{4}(1-x^2-y^2)$, which is a second order polynomial, the interpolation error vanishes for $p\geq 2$. We use $p=2$ finite elements on the perturbed boundary problem and hence in this example effectively isolate the geometric error.

We define the boundary perturbation $\widetilde{q}_\delta:\partial\Omega \to \partial\Omega_\delta$ by the radial map
\begin{align}
\widetilde{q}_\delta(x) = x + \delta \cos\!\left(\operatorname{round}\!\bigl( 5(h/h_0)^{-\alpha_n} \bigr)\theta\right) n_{\partial\Omega}
\label{eq:normal-perturbation}
\end{align}
where $\theta$ is the polar angle of $x \in \partial\Omega$ with respect to the origin. 
In this case, the normal approximation error scales as $\delta_n \sim h^{-\alpha_n}\delta$.
The perturbed domain $\Omega_\delta$, together with the mesh and numerical solutions for two mesh sizes $h_0$ and $h_1 = h_0/2$, and $\alpha_n=0,1$, is shown in Figure~\ref{fig:normal_study_mesh_solutions}.

In Figure~\ref{fig:normal_study}, we study the convergence in the energy norm, the $H^1$-seminorm, and the $L^2$-norm. The parameter $\delta$ is chosen according to the scalings implied by the estimatesfor optimal convergence in each respective norm, while varying $\alpha_n$ in \eqref{eq:normal-perturbation} to vary the normal approximation error. In agreement with the analysis, we observe that the energy norm and the $L^2$-norm converge optimally for all choices of $\alpha_n$, while the $H^1$-seminorm converges suboptimally for all tested $\alpha_n>0$. We also observe that the $L^2$-errors decrease slightly with increasing $\alpha_n$, although the convergence rates remain optimal. This may indicate a mild stabilizing effect of the increased oscillation frequency, but a precise explanation is beyond the scope of the present analysis.

\begin{figure}
\centering
\begin{subfigure}[b]{0.32\textwidth}
\centering
\includegraphics[width=\textwidth]{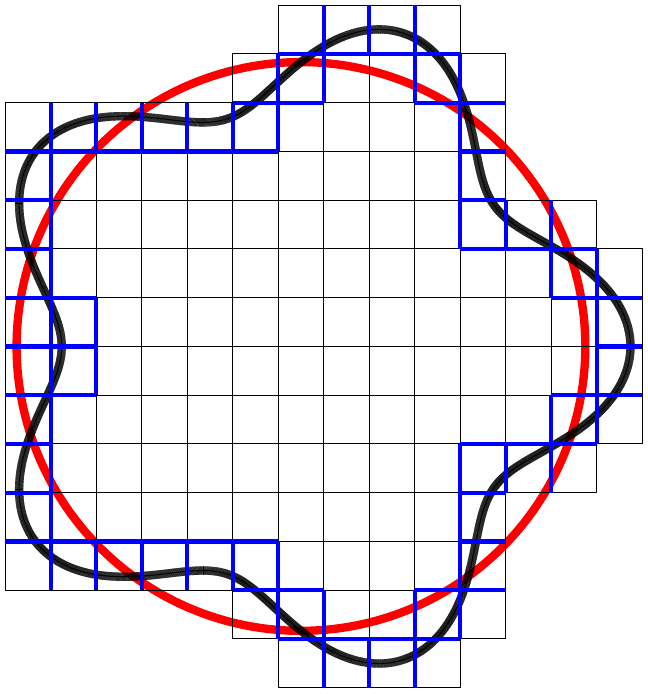}
\caption{$h_0$ mesh}
\label{fig:normal_study_h0_mesh}
\end{subfigure}
\quad
\begin{subfigure}[b]{0.32\textwidth}
\centering
\includegraphics[width=\textwidth,trim={288 95 237 77},clip]{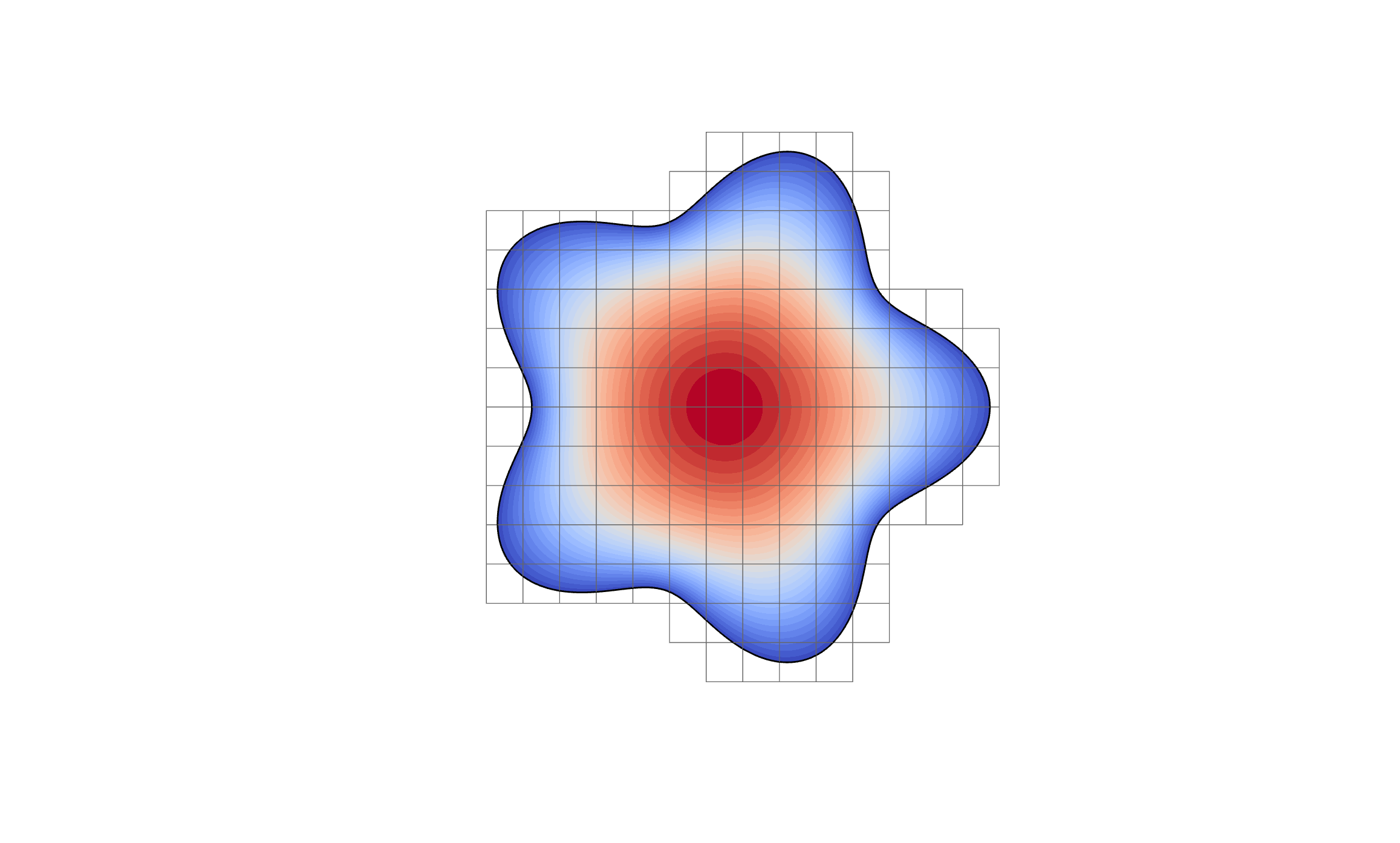}
\caption{$h_0$ solution}
\label{fig:normal_study_h0_solution}
\end{subfigure}

\vspace{0.5cm}

\begin{subfigure}[b]{0.32\textwidth}
\centering
\includegraphics[width=\textwidth,trim={288 95 237 77},clip]{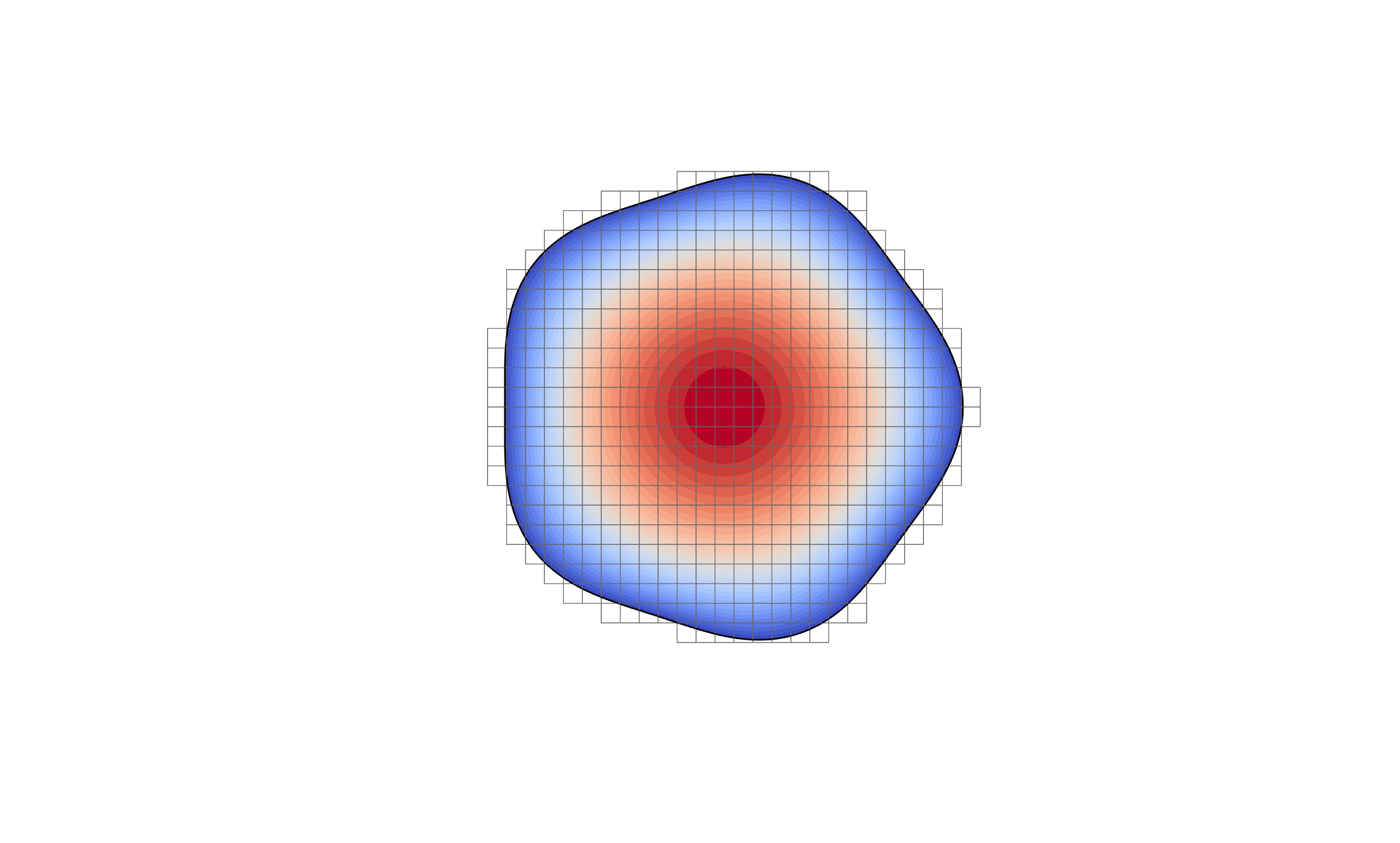}
\caption{$h_1$ solution, $\alpha_n=0$}
\label{fig:normal_study_h1_alpha0}
\end{subfigure}
\quad
\begin{subfigure}[b]{0.32\textwidth}
\centering
\includegraphics[width=\textwidth,trim={288 95 237 77},clip]{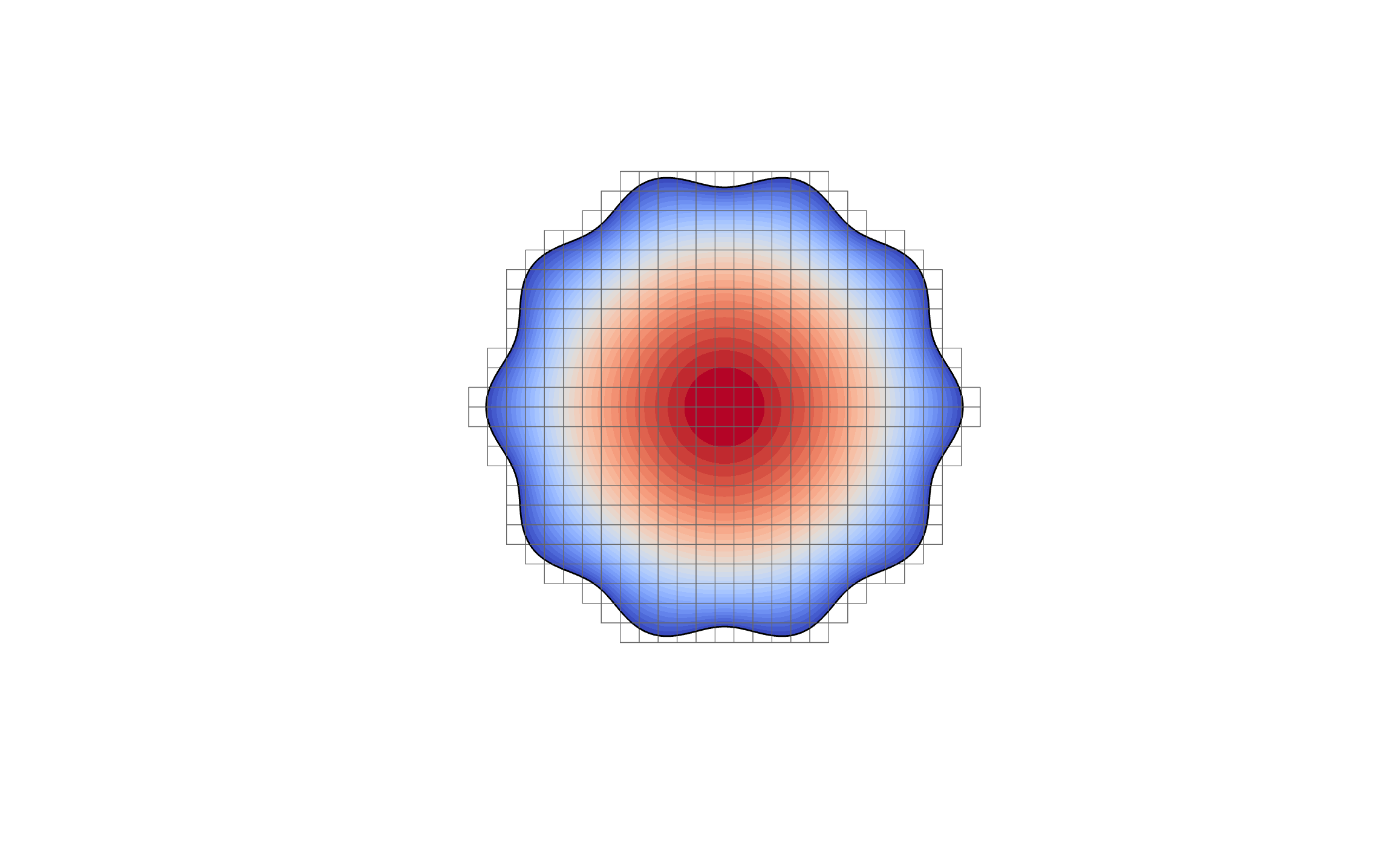}
\caption{$h_1$ solution, $\alpha_n=1$}
\label{fig:normal_study_h1_alpha1}
\end{subfigure}

\caption{\emph{Normal study mesh and solutions.} Illustration of the mesh and numerical solution for two mesh sizes, $h_0$ and $h_1=h_0/2$, for different values of the parameter $\alpha_n$ in \eqref{eq:normal-perturbation}. The boundary perturbation is scaled as $\delta = h^p$, while $\alpha_n$ controls the oscillation frequency and thereby the accuracy of the normal approximation, yielding $\delta_n \sim h^{-\alpha_n}\delta$.}
\label{fig:normal_study_mesh_solutions}
\end{figure}

\begin{figure}
\centering
\begin{subfigure}[b]{0.32\textwidth}
\centering
\includegraphics[width=\textwidth]{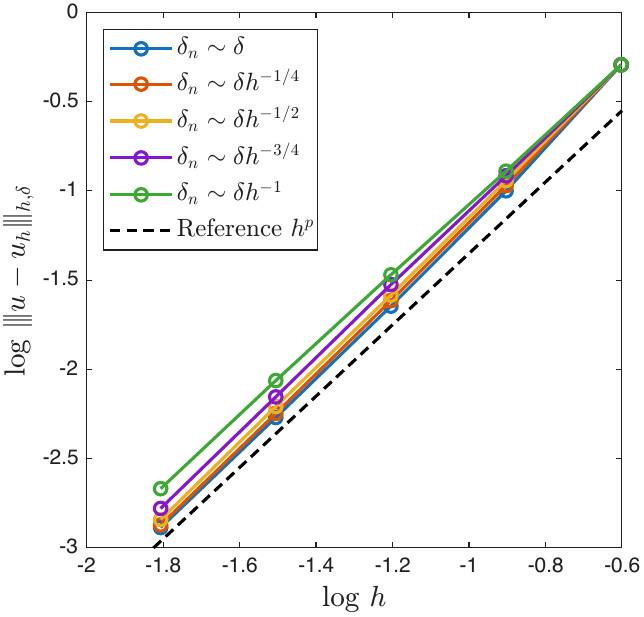}
\caption{Energy norm, $\delta=h^{p+1/2}$}
\label{fig:normal_p2_E}
\end{subfigure}
\hfill
\begin{subfigure}[b]{0.32\textwidth}
\centering
\includegraphics[width=\textwidth]{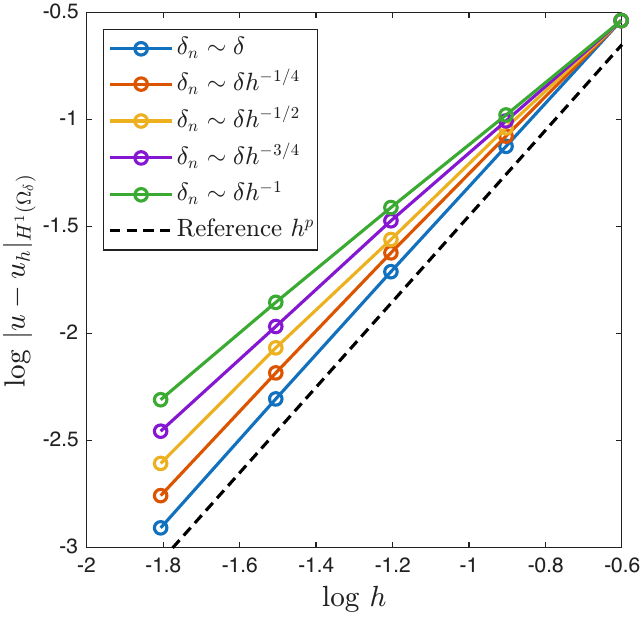}
\caption{$H^1$-seminorm, $\delta=h^{p}$}
\label{fig:normal_p2_H1}
\end{subfigure}
\hfill
\begin{subfigure}[b]{0.32\textwidth}
\centering
\includegraphics[width=\textwidth]{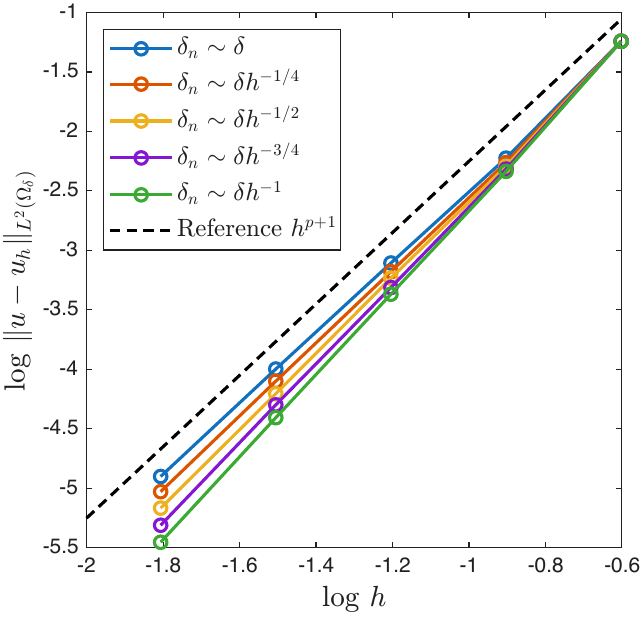}
\caption{$L^2$-norm, $\delta=h^{p+1}$}
\label{fig:normal_p2_L2}
\end{subfigure}

\caption{\emph{Normal study.} Convergence in the energy norm, $H^1$-seminorm, and $L^2$-norm for $p=2$ and varying values of $\alpha_n$. The energy norm and $L^2$-norm retain optimal convergence for all $\alpha_n$, while the $H^1$-seminorm deteriorates for $\alpha_n > 0$, in agreement with the analysis. The $L^2$-errors decrease slightly with increasing $\alpha_n$, which may suggest a mild stabilizing effect of the higher oscillation frequency.}
\label{fig:normal_study}
\end{figure}

\subsection{Level-Set Study}

A common geometric representation in unfitted finite element methods is to use a discrete level-set function $\phi_h: \IR^d \to \IR$ to approximate the boundary $\partial\Omega$ as the zero level set of $\phi_h$, i.e. $\partial\Omega_\delta = \{x \in \IR^d : \phi_h(x) = 0\}$. If $\phi_h$ is piecewise linear, the boundary is described by a polygon, and the geometric errors satisfy $\delta \sim h^2$ and $\delta_n \sim h$, assuming that $\partial\Omega$ is sufficiently smooth.
In Figure~\ref{fig:ls_study_mesh_solutions}, we show the mesh and numerical solutions for two mesh sizes when a piecewise linear level-set function is used to describe the domain.

In Figure~\ref{fig:ls_study}, we study convergence in the energy norm, the $H^1$-seminorm, and the $L^2$-norm using a piecewise linear level-set function. For $p=1$, we observe optimal order convergence in all norms. For $p=2$, the observed convergence rates are consistent with the geometry-induced limitations predicted by the analysis. In particular, the energy norm and the $L^2$-norm exhibit the expected geometry-limited convergence rates, while the $H^1$-seminorm converges suboptimally due to the normal approximation error. However, the observed $H^1$ convergence rate is approximately $h^{1/2}$ higher than predicted by the analysis. 

In light of the normal approximation study, which indicates that the $\delta_n$-term in the $H^1$-estimate is sharp, we attribute this improved behavior to the fact that the worst-case scaling of the normal error is not fully realized in this level-set approximation.

\begin{figure}
\centering
\begin{subfigure}[b]{0.32\textwidth}
\centering
\includegraphics[width=\textwidth]{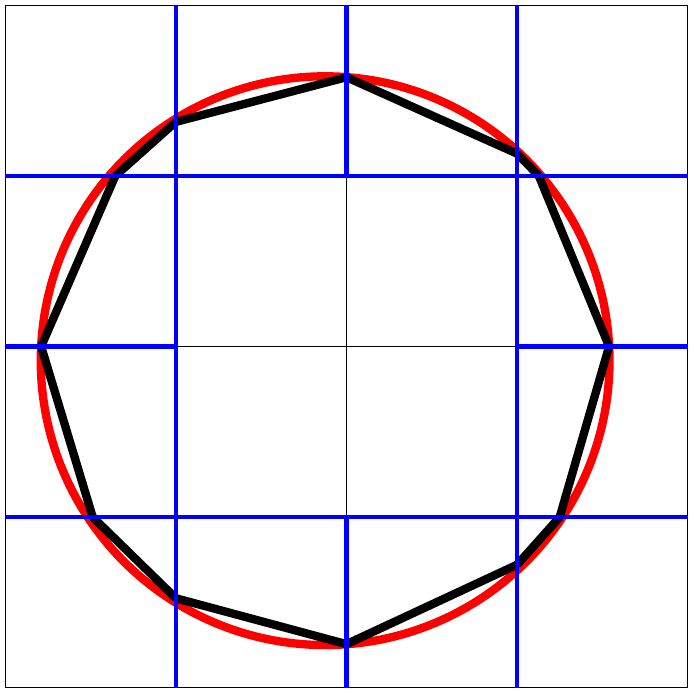}
\caption{$h_0$ mesh}
\label{fig:ls_study_h0_mesh}
\end{subfigure}
\hfill
\begin{subfigure}[b]{0.32\textwidth}
\centering
\includegraphics[width=\textwidth,trim={276 101 227 70},clip]{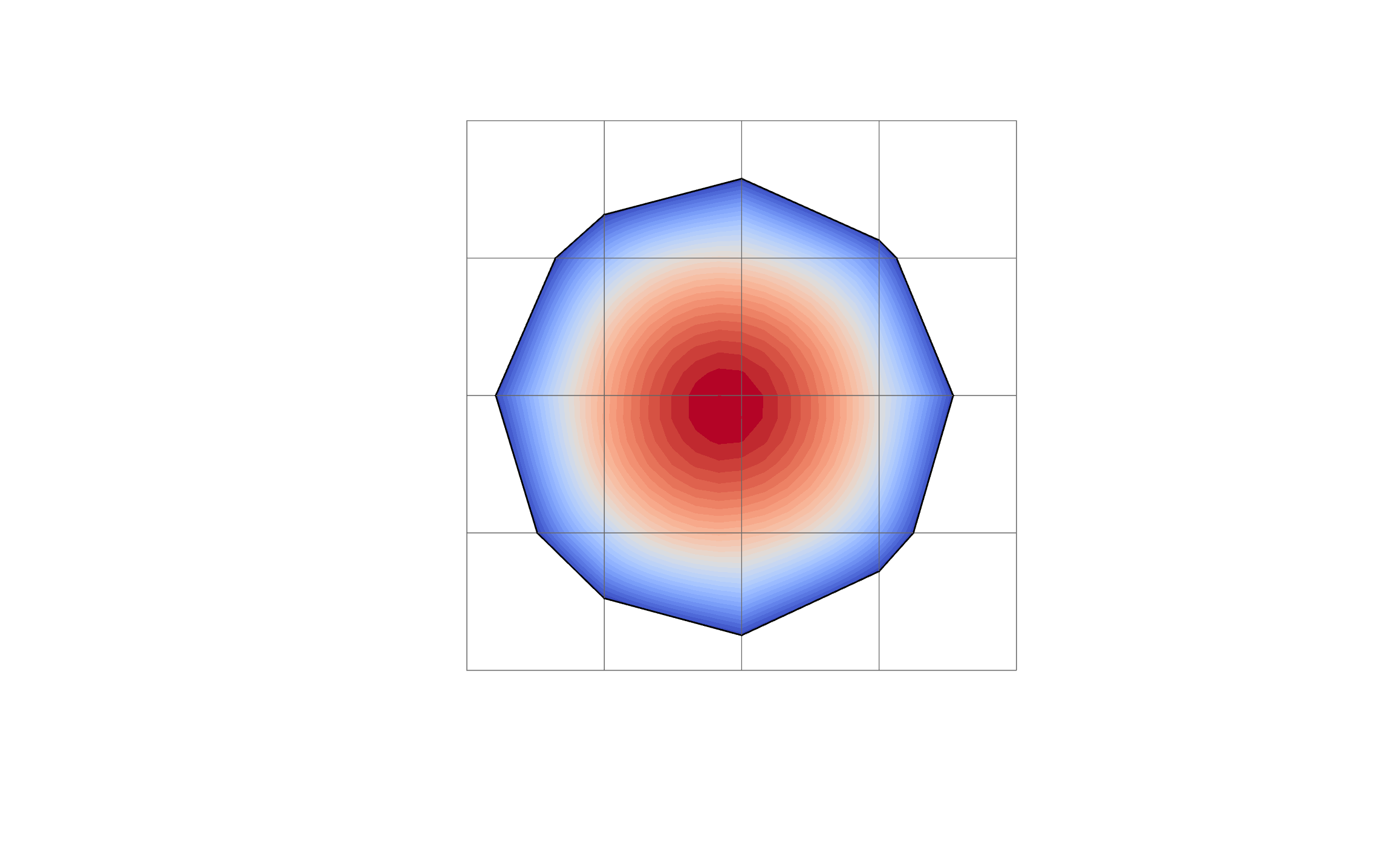}
\caption{$h_0$ solution}
\label{fig:ls_study_h0_solution}
\end{subfigure}
\hfill
\begin{subfigure}[b]{0.32\textwidth}
\centering
\includegraphics[width=\textwidth,trim={276 101 227 70},clip]{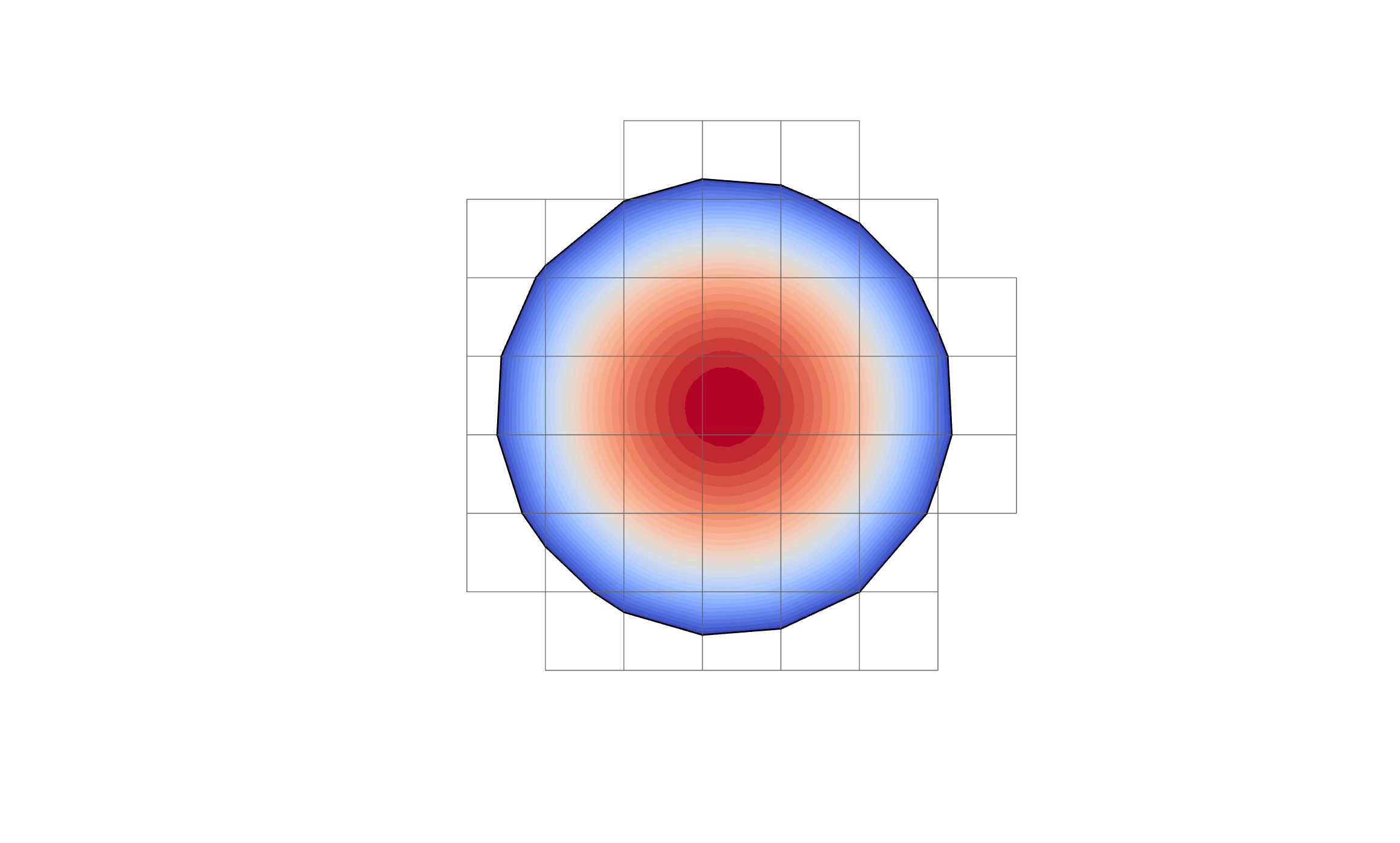}
\caption{$h_1$ solution}
\label{fig:ls_study_h1_solution}
\end{subfigure}

\caption{\emph{Level-set mesh and solutions.} Illustration of the mesh and numerical solution for two mesh sizes, $h_0$ and $h_1=h_0/2$, when a piecewise linear level-set function is used to describe the domain. The resulting polygonal boundary provides a geometric approximation with $\delta \sim h^2$ and $\delta_n \sim h$.}
\label{fig:ls_study_mesh_solutions}
\end{figure}

\begin{figure}
\centering
\begin{subfigure}[b]{0.32\textwidth}
\centering
\includegraphics[width=\textwidth]{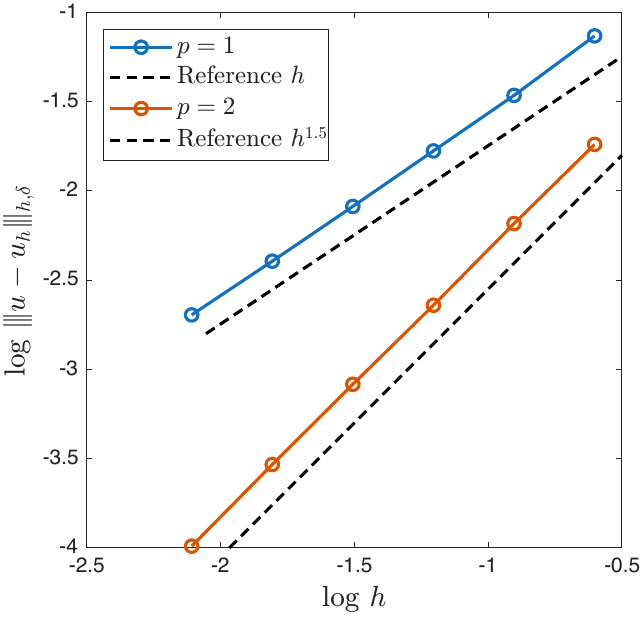}
\caption{Energy norm}
\label{fig:ls_study_E}
\end{subfigure}
\hfill
\begin{subfigure}[b]{0.32\textwidth}
\centering
\includegraphics[width=\textwidth]{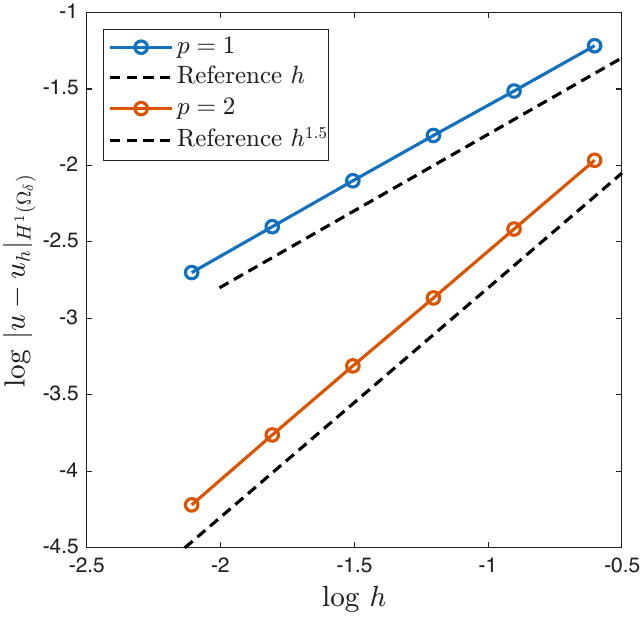}
\caption{$H^1$-seminorm}
\label{fig:ls_study_H1}
\end{subfigure}
\hfill
\begin{subfigure}[b]{0.32\textwidth}
\centering
\includegraphics[width=\textwidth]{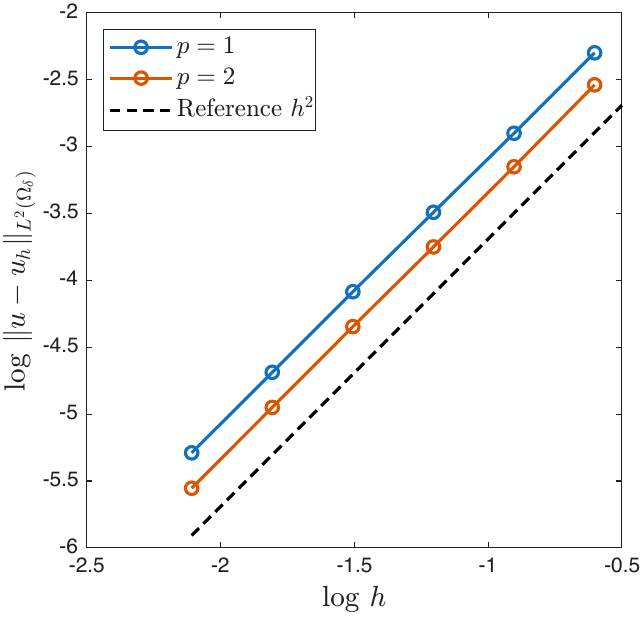}
\caption{$L^2$-norm}
\label{fig:ls_study_L2}
\end{subfigure}

\caption{\emph{Level-set study.} Convergence in the energy norm, $H^1$-seminorm, and $L^2$-norm for polynomial orders $p=1$ and $p=2$ using a piecewise linear level-set representation of the domain. For $p=1$, optimal convergence is observed in all norms. For $p=2$, the observed convergence rates are consistent with the geometry-induced limitations predicted by the analysis: the energy norm and $L^2$-norm exhibit the expected geometry-limited rates, while the $H^1$-seminorm converges suboptimally due to the normal approximation error. The observed $H^1$ rate is, however, slightly higher than predicted, indicating that the worst-case normal error is not fully realized in this setting.}
\label{fig:ls_study}
\end{figure}

\section{Conclusions}
\label{sec:conclusions}

We have analyzed the effect of geometric approximation errors in Nitsche’s method applied to elliptic problems on perturbed domains. The analysis provides a clear separation between discretization errors and geometry-induced errors, and reveals a fundamental distinction between the norms in how these errors affect the solution.

In the energy norm, the boundary location error enters with an $h^{-1/2}$ amplification, while the estimate is insensitive to the normal approximation error. This implies that optimal convergence requires the geometric approximation to satisfy $\delta \sim h^{p+1/2}$. In contrast, the refined $H^1$-seminorm estimate removes this amplification and yields an additive contribution of the boundary location and normal errors, showing that the normal approximation error $\delta_n$ plays a decisive role for convergence in this norm. For the $L^2$-norm, we establish an optimal order estimate in which the geometry contribution appears as a separate additive term, decoupled from the mesh size and insensitive to the normal error.

These results highlight that geometric perturbations affect different norms in qualitatively different ways: the energy norm amplifies boundary location errors while remaining insensitive to normal errors, the $H^1$-seminorm separates boundary location and normal errors, and the $L^2$-norm is comparatively robust with respect to geometric inaccuracies.

The numerical experiments are consistent with the theoretical predictions and illustrate how the different geometric error components influence convergence in practice. In particular, the normal approximation study demonstrates that the $\delta_n$-term in the $H^1$-estimate is sharp, while the level-set study shows that in practical approximations the worst-case scaling of the normal error may not be fully realized, leading in some cases to slightly improved convergence rates.

From a practical perspective, the results provide guidance on the accuracy
needed in geometric representations for unfitted finite element methods.
Boundary location errors need to be carefully controlled to guarantee optimal
convergence in the energy norm, while the boundary normal approximation also
plays a role in guaranteeing optimal convergence in the \(H^1\)-seminorm. The
\(L^2\)-estimate, on the other hand, exhibits a more natural dependence on the
geometry: the boundary location error enters as a separate additive contribution,
and no explicit normal-error term appears.

The analysis is carried out in an abstract CutFEM framework and applies to a wide class of unfitted and approximate-domain methods. Possible directions for future work include the development of error estimates under weaker geometric regularity assumptions, such as geometries controlled only in Sobolev norms through PDE-based descriptions, and the extension of the analysis to polygonal and piecewise smooth geometric approximations.

\bigskip
\paragraph{Acknowledgement.} This research was supported in part by the Swedish Research
Council (2021-04925, 2025-05562), the Swedish
Research Programme Essence, and the Kempe Foundations (JCSMK22-0139).

\bibliographystyle{habbrv}
\footnotesize{
\bibliography{perturbed-nitsche-refs}
}

\bigskip
\bigskip
\noindent
\footnotesize {\bf Authors' addresses:}

\smallskip
\noindent
Mats G. Larson,  \quad \hfill \addressumushort\\
{\tt mats.larson@umu.se}

\smallskip
\noindent
Karl Larsson,  \quad \hfill \addressumushort\\
{\tt karl.larsson@umu.se}

\smallskip
\noindent
Shantiram Mahata,  \quad \hfill Mathematics and Physics, Linnaeus~University, Sweden\\
{\tt shantiram.mahata@lnu.se}

\end{document}